\newtheorem{theorem}{Theorem}[section]
\newtheorem{lemma}[theorem]{Lemma}
\newtheorem{remark}[theorem]{Remark}
\numberwithin{equation}{section}
\begin{document}

\title[open cavity scattering problems]{An adaptive finite element
DtN method for the open cavity scattering problems}

\author{Xiaokai Yuan}
\address{School of Mathematical Science, Zhejiang University,
Hangzhou 310027, China.}
\email{yuan170@zju.edu.cn}

\author{Gang Bao}
\address{School of Mathematical Science, Zhejiang University,
Hangzhou 310027, China.}
\email{baog@zju.edu.cn}

\author{Peijun Li}
\address{Department of Mathematics, Purdue University, West Lafayette, Indiana
47907, USA}
\email{lipeijun@math.purdue.edu}

\thanks{The work of GB is supported in part by an NSFC Innovative Group Fund
(No.11621101). The research of PL is supported in part by the NSF grant
DMS-1912704.}

\subjclass[2010]{65M30, 78A45, 35Q60}

\keywords{electromagnetic cavity scattering, TM and TE polarizations, adaptive
finite element method, transparent boundary condition, a posteriori error
estimates}

\begin{abstract}
Consider the scattering of a time-harmonic electromagnetic plane wave by an
open cavity which is embedded in a perfectly electrically conducting infinite
ground plane. This paper concerns the numerical solutions of the open cavity
scattering problems in both transverse magnetic and transverse electric
polarizations. Based on the Dirichlet-to-Neumann (DtN) map for each
polarization, a transparent boundary condition is imposed to reduce the
scattering problem equivalently into a boundary value problem in a bounded
domain. An a posteriori error estimate based adaptive finite element DtN method
is proposed. The estimate consists of the finite element approximation error and
the truncation error of the DtN operator, which is shown to decay exponentially
with respect to the truncation parameter. Numerical experiments are presented
for both polarizations to illustrate the competitive behavior of the adaptive
method.
\end{abstract}

\maketitle

\section{Introduction}

Consider the electromagnetic scattering of a time-harmonic plane wave by an
open cavity, which is referred to as a bounded domain embedded in the ground
with its opening aligned with the ground surface. The open cavity scattering
problems have significant applications in industry and military. In
computational and applied electromagnetics, one of the physical parameter of
interests is the radar cross section (RCS), which measures the detectability of
a target by a radar system. It is crucial to have a deliberate control in the
form of enhancement or reduction of the RCS of a target in the stealth
technology. The cavity RCS caused by jet engine inlet ducts or
cavity-backed patch or slot antennas can dominate the total RCS of an aircraft
or a device. It is indispensable to have a thorough understanding of the
electromagnetic scattering characteristic of a target, particularly a cavity, in
order to successfully implement any desired control of its RCS.

Due to the important applications, the open cavity scattering problems have
received much attention by many researchers in both of the engineering and
mathematics communities. The time-harmonic problems of cavity-backed apertures
with penetrable material filling the cavity interior were introduced and studied
initially by researchers in the engineering community \cite{j-e-1998,
lj-ieee-2000, JV-ieee-91}. The mathematical analysis for the well-posedness of
the variational problems can be found in \cite{abw-2000, abw-2001, abw-2002},
where the non-local transparent boundary conditions, based on the Fourier
transform, were proposed on the open aperture of the cavity. It has been
realized that the phenomena of electromagnetic scattering by cavities not only
have striking physics but also give rise to many interesting mathematical
problems. As more people work on this subject, there has been a rapid
development of the mathematical theory and computational methods for the open
cavity scattering problems. The stability estimates with explicit dependence on
the wavenumber were obtained in \cite{by-arma-2016, byz-sjma-2012}. Various
analytical and numerical methods have been proposed to solve the challenging
large cavity problem \cite{bglz-ieee-2012, bz-ieee-2005, bs-sisc-2005,
wds-nmtma-2008, lms-sinum-2013}. The overfilled cavity problems, where the
filling material inside the cavity may protrude into the space above the ground
surface, were investigated in \cite{hw-ieee-2005, hwh-cicp-2009, LWZ-mmas-2012,
A-JCP-2006}, where the transparent boundary conditions, based on the Fourier
series, were introduced on a semi-circle enclosing the cavity and filling
material. The multiple cavity scattering problem was examined in
\cite{LA-JCP-13,XW-CICP-2016}, where the cavity is assumed to be composed of
finitely many disjoint components. The mathematical analysis can be found in
\cite{bhy-siap-18, dmn-arma-2009} on the related scattering problems in a
locally perturbed half-plane. We refer to the survey \cite{Li-JCM-2018} and the
references cited therein for a comprehensive account on the modeling, analysis,
and computation of the open cavity scattering problems. 

There are two challenges for the open cavity scattering problems: the problems
are formulated in unbounded domains; the solutions may have singularities due to
possible nonsmooth surfaces and discontinuous media. In this paper, we present
an adaptive finite element method with transparent boundary condition to
overcome the difficulties. 

The first issue concerns the domain truncation. The unbounded physical domain
needs to be truncated into a bounded computational domain. An appropriate
boundary condition is required on the artificial boundary of the truncated
domain to avoid unwanted wave reflection. Such a boundary condition is known
as a transparent boundary condition (TBC). There are two different TBCs for the
open cavity scattering problems. For a regular open cavity, where the filling
material is inside the cavity, the Fourier transform based TBC is imposed on the
open aperture of the cavity; for an overfilled cavity, where the filling
material appears to protrude out of the cavity through the open aperture into
the space above the ground surface, the Fourier series based TBC is imposed on
the semi-circle enclosing the cavity and the protruding part. The latter is
adopted in this work since it can be used to handle more general open cavities.
We refer to the perfectly matched layer (PML) techniques \cite{XW-CICP-2016,
zmd-jcm-2009} and the method of boundary integral equations \cite{bgl-nmt-2011}
as alternative approaches for dealing with the issue of the unbounded domains of
the open cavity scattering problems. 

Due to the existence of corners of cavities or the discontinuity of the
dielectric coefficient for the filling material, the solutions have singularites
that slow down the convergence of the finite element for uniform mesh
refinements. The second issue can be resolved by using the a posteriori error
estimate based adaptive finite element method. The a posteriori error estimates
are computable quantities from numerical solutions. They measure the solution
errors of discrete problems without requiring any a priori information of exact
solutions. It is known that the meshes and the associated numerical complexity
are quasi-optimal for appropriately designed adaptive finite element methods. 

The goal of this paper is to combine the adaptive finite element method and the 
transparent boundary conditions to solve the open cavity scattering problems in
an optimal fashion. Specifically, we consider the scattering of a time-harmonic
electromagnetic plane wave by an open cavity embedded in an infinite ground
plane. Throughout, the medium is assumed to be constant in the $x_3$
direction. The ground plane and the cavity wall are assumed to be perfect
electric conductors. The cavity is filled with a nonmagnetic and possibly
inhomogeneous material, which may protrude out of the cavity to the upper
half-space in a finite extend. The infinite upper half-space above the ground
plane and the protruding part of the cavity is composed of a homogeneous medium.
Two fundamental polarizations, transverse magnetic (TM) and transverse electric
(TE), are studied. In this setting, the three-dimensional Maxwell equations may
be reduced to the two dimensional Helmholtz equation and generalized Helmholtz
equation for TM and TE polarizations, respectively. Based on the
Dirichlet-to-Neumann (DtN) map for
each polarization, a transparent boundary condition is imposed to reduce the
scattering problem equivalently into a boundary value problem in a bounded
domain. The nonlocal DtN operator is defined as an infinite Fourier series
which needs to be truncated into a sum of finitely many terms in actual
computation. The a posteriori error estimate is derived bewteen the solution of
original scattering problem and the finite element solution of the discrete
problem with the truncated DtN operator. The error estimate takes account of
the finite element discretization error and the truncation error of the DtN
operator. Using the asymptotic properties of the solution and DtN
operator, we consider a dual problem for the error and show that the truncation
error of the DtN operator decays exponentially respect to the truncation
parameter, which implies that the truncation number does not need to be large. 
Numerical experiments are presented for both polarization cases to demonstrate
the effectiveness of the proposed adaptive method. The related work can be found
in \cite{JLZ-CCP-2013, JLLZ-JSC-2017, JLLWWZ, wbllw-sinum-2015} on the adaptive
finite element DtN method for solving other scattering problems in open
domains. 

The paper is organized as follows. Section \ref{pf} concerns the problem
formulation. The three-dimensional Maxwell equations are introduced and reduced
into the two-dimensional Helmholtz equation under the two fundamental modes:
transverse magnetic (TM) polarization and transverse electric (TE)
polarization. Sections \ref{section:TM} and \ref{section:TE} are devoted to the
TM and TE polarizations, respectively. In each section, the variational problem
and its finite element approximation are introduced; the a posteriori error
analysis is given for the discrete problem with the truncated DtN operator; the
adaptive finite element algorithm is presented. In Section
\ref{section:correction}, the stiff matrix is constructed for the the TBC
part of the sesquilinear form. Section \ref{section:rcs} describes the formulas
of the backscatter radar cross section (RCS). Section \ref{section:numerical}
presents some numerical examples to illustrate the advantages of the proposed 
method. The paper is concluded with some general remarks and directions for
future research in Section \ref{section:conclusion}.

\section{Problem formulation}\label{pf}

Consider the electromagnetic scattering by an open cavity, which is a
bounded domain embedded in the ground with its opening aligned with the ground
surface. By assuming the time dependence $e^{-{\rm i}\omega t}$, the
electromagnetic wave propagation is governed by the time-harmonic Maxwell
equations
\begin{equation}\label{me}
 \nabla\times\boldsymbol E={\rm i}\omega\boldsymbol B,\quad
\nabla\times\boldsymbol H=-{\rm i}\omega\boldsymbol D+\boldsymbol J,
\end{equation}
where $\boldsymbol E$ is the electric field, $\boldsymbol H$ is the magnetic
field, $\boldsymbol B$ is the magnetic flux density, $\boldsymbol D$ is the
electric flux density, $\boldsymbol J$ is the electric current density, and
$\omega>0$ is the angular frequency. For a linear medium, the constitutive
relations, describing the macroscopic properties of the medium, are given by 
\begin{equation}\label{cr}
 \boldsymbol B=\mu\boldsymbol H,\quad \boldsymbol D=\epsilon\boldsymbol E,\quad
\boldsymbol J=\sigma\boldsymbol E,
\end{equation}
where $\mu$ is the magnetic permeability, $\epsilon$ is the electric
permittivity, and $\sigma$ is the electrical conductivity. Throughout, the
medium is assumed to be non-magnetic, i.e., the magnetic permeability $\mu$ is a
constant everywhere, but the electric permittivity $\epsilon$ and the
electrical conductivity $\sigma$ are allowed to be spatial variable
functions. Substituting \eqref{cr} into \eqref{me} leads to a coupled system for
the electric and magnetic fields
\begin{equation}\label{eh}
 \nabla\times\boldsymbol E={\rm i}\omega\mu\boldsymbol H,\quad
\nabla\times\boldsymbol H=-{\rm i}\omega\epsilon\boldsymbol E+\sigma\boldsymbol
E
\end{equation}
Eliminating the magnetic field from \eqref{eh}, we may obtain the Maxwell
system for the electric field
\begin{equation}\label{ef}
 \nabla\times(\nabla\times \boldsymbol E)-\kappa^2\boldsymbol E=0,
\end{equation}
where the wavenumber $\kappa=(\omega^2\epsilon\mu+{\rm
i}\omega\mu\sigma)^{1/2}, \Im\kappa\geq 0$. Similarly, we may eliminate the
electric field and obtain the Maxwell system for the magnetic field 
\begin{equation}\label{mf}
 \nabla\times(\kappa^{-2}\nabla\times\boldsymbol H)-\boldsymbol H=0. 
\end{equation}

When the cavity has a constant cross section along the $x_3$-axis and the
plane of incidence is in the $x_1 x_2$-plane, as a consequence, the
electromagnetic fields are independent of the $x_3$ variable. The
three-dimensional Maxwell equations can be reduced to either the two-dimensional
Helmholtz equation or the two-dimensional generalized Helmholtz equation. 

Let $D\subset\mathbb R^2$ be the cross section of the $x_3$-invariant cavity
with a Lipschitz continuous boundary $\partial D=S\cup\Gamma$. Here $S$ is the
cavity wall and $\Gamma$ is the open aperture of the cavity, which is aligned
with the infinite ground plane $\Gamma_g$. The cavity is filled with an
inhomogeneous medium characterized by the dielectric permittivity $\epsilon$,
the magnetic permeability $\mu$, and the electric conductivity $\sigma$. We
point out that the inhomogeneous medium filling the cavity may protrude into the
space above the ground plane, which is called an overfilled cavity. Let
$B_R^+=\{x\in\mathbb R^2: |x|<R,\,x_2>0\}$ and $B_{\hat R}^+=\{x\in\mathbb
R^2: |x|<\hat R,\,x_2>0\}$ be upper half-discs with radii $R$ and $\hat R$,
where $R>\hat R>0$. Denote by $\Gamma_R^+=\{x\in\mathbb R^2:
|x|=R, x_2>0\}$ and $\Gamma_{\hat R}^+=\{x\in\mathbb R^2:
|x|=\hat R, x_2>0\}$ the upper semi-circles. The radius $\hat R$ can be taken to
be sufficiently large such that the open exterior domain $\mathbb R^2_+\setminus
B_{\hat R}^+$ is filled with a homogeneous medium with constant permittivity
$\epsilon=\epsilon_0$ and zero conductivity $\sigma=0$. Let $\Omega=B_R^+\cup D$
be the bounded domain where our reduced boundary value problems are formulated.
The problem geometry is shown in Figure \ref{pg}. 

\begin{figure}
\centering
\includegraphics[width=0.45\textwidth]{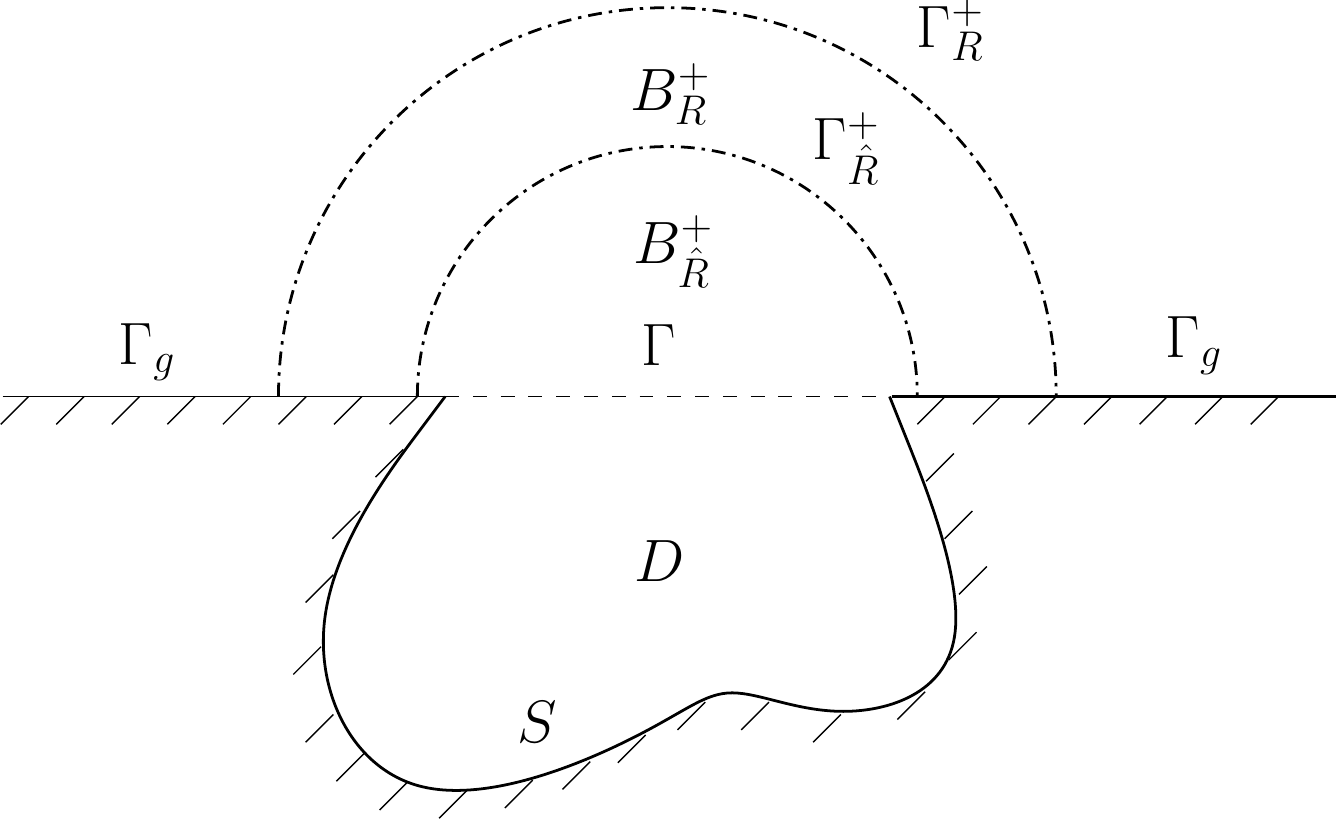}
\caption{Problem geometry of the electromagnetic scattering by an open cavity.}
\label{pg}	
\end{figure}

Since the structure is invariant in the $x_3$-axis, we consider two fundamental
polarizations for the electromagnetic fields: transverse magnetic
(TM) polarization and transverse electric (TE) polarization. In TM case,
the magnetic field is perpendicular to the plane of incidence and does not have
the component in the $x_3$-axis; the electric field, being perpendicular to
the magnetic field and lying in the $x_1 x_2$-plane, is invariant in the
$x_3$-axis and takes the form $\boldsymbol E(x_1, x_2)=(0, 0, u(x_1, x_2))$,
where $u$ is a scalar function. It is easy to verify from \eqref{ef} that $u$
satisfies the Helmholtz equation
\begin{equation}\label{he}
 \Delta u+\kappa^2 u=0\quad\text{in} ~ \mathbb R^2_+\cup D.
\end{equation}
In TE case, the electromagnetic fields are characterized by its electric field
being perpendicular to the plane of incidence and contain no electric field
component in the $x_3$-axis. The magnetic field, being perpendicular to
the electric field and lying in the $x_1 x_2$-plane, is invariant in the
$x_3$-axis and has the form $\boldsymbol H(x_1, x_2)=(0, 0, u(x_1, x_2))$,
where $u$ is also a scalar function. It follows from \eqref{mf} that $u$
satisfies the generalized Helmholtz equation
\begin{equation}\label{ghe}
 \nabla\cdot(\kappa^{-2}\nabla u)+u=0\quad\text{in} ~ \mathbb R^2_+\cup D.
\end{equation}

When the ground plane and the cavity wall are assumed to be perfect conductors,
the following perfectly electrically conducting (PEC) boundary condition can be
imposed
\begin{equation}\label{pec}
 \nu\times\boldsymbol E=0 \quad\text{on}~ \Gamma_g\cup S,
\end{equation}
where $\nu$ is the unit normal vector to $\Gamma_g$ and $S$. In TM polarization,
the PEC boundary condition \eqref{pec} reduces to the homogeneous Dirichlet
boundary condition
\begin{equation}\label{dbc}
 u=0 \quad\text{on}~ \Gamma_g\cup S. 
\end{equation}
In TE polarization, the PEC boundary condition \eqref{pec} reduces to the
homogeneous Neumann boundary condition
\begin{equation}\label{nbc}
 \partial_\nu u=0 \quad\text{on}~ \Gamma_g\cup S. 
\end{equation}

In this paper, we consider the numerical solutions and present an adaptive
finite element DtN method for both of the TM problem \eqref{he}, \eqref{dbc} and
the TE problem \eqref{ghe}, \eqref{nbc}. The more complicated three-dimensional
Maxwell equations will be our future work.

\section{TM polarization}\label{section:TM}

In this section, we discuss the TM polarization and study its finite element
approximation. The a posteriori analysis is carried out for both the finite
element discretization error and the DtN operator truncation error. An adaptive
finite element DtN method is presented for the truncated discrete problem.   

\subsection{Variational problem}

In TM polarization, the nonzero component of the electric field $u$ satisfies
the boundary value problem 
\begin{equation}\label{TM_total}
\begin{cases}
\Delta u+\kappa^2 u=0  \quad &{\rm in} ~ \mathbb R^2_+\cup D,\\
u=0 \quad & {\rm on} ~ \Gamma_g\cup S.
\end{cases}
\end{equation}
Since the problem is imposed in the open domain, a radiation condition is
required to complete the formulation.  

Consider the incidence of a plane wave 
\begin{equation*}\label{incident}
	u^{\rm i}(x_1, x_2)=e^{{\rm i}(\alpha x_1-\beta x_2)},
\end{equation*} 
which is sent from the above to impinge the cavity. Here $\alpha=\kappa_0
\sin\theta, \beta=\kappa_0 \cos\theta$, $\theta\in\left(-\frac{\pi}{2},
\frac{\pi}{2}\right)$ is the incident angle, and
$\kappa_0=\omega(\epsilon_0\mu)^{1/2}$ is the wavenumber in the free space
$\mathbb R^2_+\setminus B_R^+$. It is easy to verify from \eqref{dbc} that the
reflected wave is 
\begin{equation*}\label{TM_reflection}
u^{\rm r}(x_1, x_2)=-e^{{\rm i}(\alpha x_1+\beta x_2)}.
\end{equation*}
By the Jacobi--Anger identity, the incident and reflected waves admit the
following expansions: 
\begin{equation}
u^{\rm i}(x)
=J_0(\kappa_0 r)+2\sum\limits_{n=1}^{\infty} {\rm i}^n J_n(\kappa_0 r)
\cos n(\theta-\pi/2-\phi)\label{Incident_1}
\end{equation}
and
\begin{equation}
u^{\rm r}(x)=-J_0(\kappa_0
r)-2\sum\limits_{n=1}^{\infty} {\rm i}^n
J_n(\kappa_0 r) \cos n(\theta-\pi/2+\phi)\label{TM_reflect1},
\end{equation}
where $J_n$ is the Bessel function of the first kind with order $n$ and
$x=r(\cos\phi,\sin\phi)$ with $\phi$ being the observation angle. Define the
reference wave $u^{\rm ref}=u^{\rm i}+u^{\rm r}$. It follows from
\eqref{Incident_1}--\eqref{TM_reflect1} that 
\begin{equation}
u^{\rm ref}(x) 
= \sum\limits_{n=1}^{\infty} 4{\rm i}^n J_n(\kappa_0 r) \sin n(\theta-\pi/2)
\sin n\phi. \label{TM_ref}
\end{equation}

The total field $u$ consists of the reference field $u^{\rm ref}$ and the
scattered field $u^{\rm s}$, i.e., 
\begin{equation}\label{tf}
u=u^{\rm ref}+u^{\rm s},
\end{equation}
where the scattered field $u^{\rm s}$ is required to satisfy the Sommerfeld
radiation condition
\[
 \lim_{r=|x|\to\infty}r^{1/2}(\partial_{r} u^{\rm s}-{\rm i}\kappa_0
u^{\rm s})=0.
\]

Let $L_{\rm TM}^2(\Gamma_R^{+}):=\left\{u\in L^2(\Gamma_R^{+}): u(R,0)=u(R,
\pi)=0\right\}$. For any $u\in L_{\rm TM}^2(\Gamma_R^+)$, it has the 
Fourier series expansion
\[
u(R,\phi)=\sum\limits_{n=1}^{\infty} a_n \sin n\phi,
\quad a_n=\frac{2}{\pi}\int_{0}^{\pi} u(R,\phi)\sin n\phi\, {\rm d}\phi.
\]
Define the trace function space $H_{\rm TM}^{s}(\Gamma_R^{+}):=\left\{u\in
L_{\rm TM}^2 (\Gamma_R^+): \|u\|_{H_{\rm TM}^s(\Gamma_R^+)}\leq \infty\right\},$
where the $H_{\rm TM}^s(\Gamma_R^+)$ norm is given by 
\[
\|u\|_{H^s_{\rm TM}(\Gamma_R^+)}=\left(\sum\limits_{n=1}^{\infty}
(1+n^2)^s |a_n|^2\right)^{1/2}.
\]
It is clear that the dual space of $H_{\rm TM}^s(\Gamma_R^+)$ is $H_{\rm
TM}^{-s}(\Gamma_R^+)$ with respect to the scalar product in $L^2(\Gamma_R^+)$
given by 
\[
\langle u, v\rangle_{\Gamma_R^+}=\int_{\Gamma_R^+} u\overline{v} \, {\rm d}s.
\]

As discussed in \cite{Li-JCM-2018}, a DtN operator is introduced on
$\Gamma_R^+$: 
\begin{equation}\label{TM_DtN}
(B_{\rm TM}u)(R,\phi)=\kappa_0\sum\limits_{n=1}^{\infty}
\frac{H_n^{(1)'}(\kappa_0 R)}{H_n^{(1)}(\kappa_0 R )}a_n\sin n\phi,
\end{equation}
where $H_n^{(1)}$ is the Hankel function of the first kind with order $n$. It is
shown in \cite[Lemma 3.1]{A-JCP-2006} that $B_{\rm TM}: H^{1/2}_{\rm
TM}(\Gamma_R^+)\rightarrow H_{TM}^{-1/2}(\Gamma_R^+)$ is continuous. The TBC can
be imposed for the total field as follows:
\[
\partial_{\rho}u=B_{\rm TM} u+f\quad {\rm on} ~ \Gamma_R^+,
\]
where $f=\partial_{\rho} u^{\rm ref}-B_{\rm TM}u^{\rm ref}$. Substituting
\eqref{TM_ref} into \eqref{TM_DtN} and applying the Wronskian identity, we
obtain explicitly
\begin{eqnarray*}
 f = -\frac{8}{\pi R} \sum\limits_{n=1}^{\infty}\frac{{\rm
i}^{n+1}}{H_n^{(1)}(\kappa_0 R)}\sin n(\theta-\pi/2)\sin n\phi.
\end{eqnarray*}

The original cavity scattering problem \eqref{TM_total} can be reduced
equivalently into the boundary value problem 
\begin{equation*}
\begin{cases}
\Delta u+k^2 u=0\quad & {\rm in} ~ \Omega,\\
u=0\quad & {\rm on} ~ S\cup \Gamma_g,\\
\partial_{\rho} u-B_{\rm TM} u=f \quad & {\rm
on} ~ \Gamma_R^+,
\end{cases}
\end{equation*}
which has the variational formulation: find $u\in H_0^1(\Omega):=\left\{u\in
H^1(\Omega): u=0 ~ {\rm on} ~ S\cup \Gamma_g\right\}$ such that
\begin{equation}\label{TM_v1}
a_{\rm TM} (u, v)=\int_{\Gamma_R^+} f\bar{v}\,{\rm d}s\qquad \forall\,v\in
H_0^1(\Omega).
\end{equation}
Here the sesquilinear form $a_{\rm TM}: H_0^1(\Omega)\times
H_0^1(\Omega)\to\mathbb C$ is defined as
\[
a_{\rm TM} (u, v)=\int_{\Omega} \nabla u\cdot \nabla\bar{v}\,{\rm
d}x-\int_{\Omega}\kappa^2 u\bar{v}\,{\rm
d}x-\int_{\Gamma_R^+}B_{\rm TM} u\,\bar{v}\,{\rm d}s.
\]

The following theorem of the well-posedness of the variational problem
\eqref{TM_v1} is proved in \cite{Li-JCM-2018}.

\begin{theorem}
The variational problem \eqref{TM_v1} has a unique solution $u\in
H_0^1(\Omega)$, which satisfies the estimate
\[
\|u\|_{H^1(\Omega)}\lesssim \|f\|_{H^{-1/2}(\Gamma^{+}_R)}.
\]
\end{theorem}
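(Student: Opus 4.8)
The plan is to establish well-posedness via the standard Fredholm alternative approach, so the argument splits into two pieces: coercivity up to a compact perturbation, and uniqueness. First I would decompose the sesquilinear form as $a_{\rm TM}(u,v) = b(u,v) - c(u,v)$, where $b(u,v) = \int_\Omega \nabla u \cdot \nabla \bar v\,{\rm d}x - \int_{\Gamma_R^+} B_{\rm TM} u\, \bar v\, {\rm d}s$ collects the ``good'' terms and $c(u,v) = \int_\Omega \kappa^2 u \bar v\,{\rm d}x$ is the lower-order term. The key structural fact is that the DtN operator $B_{\rm TM}$ has the right sign in its leading behaviour: writing $\langle -B_{\rm TM} u, u\rangle_{\Gamma_R^+} = -\kappa_0 \sum_n \frac{H_n^{(1)\prime}(\kappa_0 R)}{H_n^{(1)}(\kappa_0 R)} |a_n|^2$, one shows from the asymptotics of Hankel functions (or the known monotonicity/positivity properties, cf. the analysis in \cite{A-JCP-2006, Li-JCM-2018}) that $-\Re\frac{H_n^{(1)\prime}(\kappa_0 R)}{H_n^{(1)}(\kappa_0 R)} \geq 0$ and in fact grows like $n/(\kappa_0 R)$, while $\Im\frac{H_n^{(1)\prime}(\kappa_0 R)}{H_n^{(1)}(\kappa_0 R)} \geq 0$ gives a nonnegative imaginary part. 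Hence $\Re b(u,u) \gtrsim \|u\|_{H^1(\Omega)}^2$ up to controlling the boundary term, so $b$ is coercive on $H_0^1(\Omega)$.

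Next I would verify that $v \mapsto c(u,v)$ (equivalently the embedding-induced operator) is compact: this is immediate from the compact embedding $H^1(\Omega) \hookrightarrow L^2(\Omega)$ (the boundary $\partial\Omega$ is Lipschitz), together with boundedness of $\kappa$ (or at least $\kappa \in L^\infty(\Omega)$, which holds since $\epsilon,\sigma$ are bounded). By the Lax--Milgram lemma applied to $b$ and the Riesz representation of the right-hand side $v \mapsto \int_{\Gamma_R^+} f \bar v\,{\rm d}s$ (continuous by the mapping property of the trace and $f \in H^{-1/2}(\Gamma_R^+)$), the variational problem $a_{\rm TM}(u,v) = \int_{\Gamma_R^+} f\bar v\,{\rm d}s$ is then equivalent to an operator equation $(I - K)u = F$ with $K$ compact on $H_0^1(\Omega)$. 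The Fredholm alternative reduces existence and the a priori bound $\|u\|_{H^1(\Omega)} \lesssim \|f\|_{H^{-1/2}(\Gamma_R^+)}$ to proving uniqueness.

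For uniqueness, suppose $f = 0$ and $u \in H_0^1(\Omega)$ solves the homogeneous problem. Taking $v = u$ and looking at the imaginary part of $a_{\rm TM}(u,u) = 0$: the term $\Im\int_\Omega \kappa^2 |u|^2\,{\rm d}x \geq 0$ since $\Im\kappa^2 = \omega\mu\sigma \geq 0$, and $\Im\langle -B_{\rm TM} u, u\rangle_{\Gamma_R^+} \geq 0$ by the sign of $\Im\frac{H_n^{(1)\prime}}{H_n^{(1)}}$. If either contribution is strictly positive we immediately get $u=0$ on the relevant set; otherwise all $a_n = 0$, so $u$ and $\partial_\rho u$ vanish appropriately on $\Gamma_R^+$, and one extends $u$ by zero outside $B_R^+$ — using that the scattered field satisfies the Helmholtz equation with the Sommerfeld radiation condition in the exterior — and invokes unique continuation (Holmgren / Rellich's lemma) to conclude $u \equiv 0$ in all of $\Omega$. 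This uniqueness argument is exactly the step I expect to be the main obstacle: it requires care in handling the possibly vanishing absorption ($\sigma$ may be zero in parts of $D$, and is zero in $B_R^+ \setminus B_{\hat R}^+$), so one genuinely needs the exterior radiation condition plus a unique continuation principle rather than a pure energy argument. Since the theorem is quoted as already proved in \cite{Li-JCM-2018}, I would cite that reference for the technical details of this last step and simply sketch the Fredholm framework above.
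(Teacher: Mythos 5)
The paper gives no proof of this theorem---it is quoted verbatim from \cite{Li-JCM-2018}---and your G\aa rding-inequality/Fredholm sketch, with coercivity of the principal part coming from $-\Re\big(H_n^{(1)\prime}(\kappa_0R)/H_n^{(1)}(\kappa_0R)\big)\geq 0$ and uniqueness from the imaginary part plus unique continuation, is exactly the standard argument of that reference, so it is essentially the same route. The only blemish is a sign-bookkeeping slip in the uniqueness step: in $a_{\rm TM}(u,u)$ the relevant contributions are $-\Im\int_\Omega\kappa^2|u|^2\,{\rm d}x$ and $-\Im\langle B_{\rm TM}u,u\rangle_{\Gamma_R^+}$, both of which are nonpositive rather than nonnegative as written, but since they carry the same sign the conclusion that each must vanish (hence $a_n=0$ for all $n$) is unaffected.
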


Hereafter, the notation $a\lesssim b$ stands for $a\leq Cb$, where $C$
is a positive constant whose value is not required but should be clear from the
context. 

\subsection{Finite element approximation}

Let $\mathcal M_h$ be a regular triangulation of $\Omega$, where $h$ denotes the
maximum diameter of all the elements in $\mathcal M_h$. To avoid 
being distracted from the main focus of the a posteriori error analysis, we
assume for simplicity that $S$ and $\Gamma_R^+$ are polygonal to keep from using
the isoparametric finite element space and deriving the approximation error of
the boundaries $S$ and $\Gamma_R^+$. Thus any edge $e\in\mathcal M_h$ is a
subset of $\partial \Omega$ if it has two boundary vertices.

Let $V_h\subset H_0^1(\Omega)$ be a conforming finite element space, i.e., 
\[
V_h:=\left\{v_h\in C(\overline{\Omega}): v_h|_T\in P_m(K) ~
\forall\, T\in \mathcal M_h, v_h=0 ~ {\rm on} ~S\cup\Gamma_g\right\}.
\]

In practice, the DtN operator \eqref{TM_DtN} needs to be truncated into a sum
of finitely many terms
\begin{equation}\label{TM_tDtN}
B_{\rm TM}^{N} u=\kappa_0\sum\limits_{n=1}^{N} 
\frac{H_n^{(1)'}(\kappa_0 R)}{H_n^{(1)}(\kappa_0 R)} a_n \sin
n\phi,\quad a_n=\frac{2}{\pi}\int_{0}^{\pi} u(R, \phi)\sin n\phi\,{\rm
d}\phi.
\end{equation}
Taking account of the DtN operator truncation, we obtain the finite element
approximation to the variational problem \eqref{TM_v1}: find $u_h\in V_h$
such that 
\begin{equation}\label{TM_v2}
a_{\rm TM}^N (u^h, v^h)=\int_{\Gamma_R^+} f\,\overline{v^h}\, {\rm
d}s\quad\forall~v_h\in V_h,
\end{equation}
where the sesquilinear form $a_{\rm TM}^N: V_h\times V_h\to\mathbb C$ is
\[
a_{\rm TM}^N (u^h, v^h)=
\int_{\Omega} \nabla u^h\cdot \nabla \overline{v^h}\,{\rm
d}x-\int_{\Omega}\kappa^2 u^h\,\overline{v^h}\,{\rm d}x
-\int_{\Gamma_R^+}B_{\rm TM}^{N} u^h \,\overline{v^h}\,{\rm d}s.
\]

For sufficiently small $h$ and sufficiently large $N$, the discrete inf-sup
condition of the sesquilinear form $a_{\rm TM}^{N}$ can be established by an
argument of Schatz \cite{S-mc74}. It follows from the general theory in
\cite{BA-AP-1973} that the truncated variational problem \eqref{TM_v2} admits a
unique solution. Since our focus is the a posteriori error estimate and
the associated adaptive algorithm, we assume that the discrete problem
\eqref{TM_v2} has a unique solution $u_h\in V_h$.

\subsection{A posteriori error analysis}

For any triangular element $T\in\mathcal M_h$, denoted by $h_T$
its diameter. Let $\mathcal B_h$ denote the set of all the edges of $T$. For
any edge $e\in\mathcal B_h$, denote by $h_e$ its length. For any interior edge
$e$, which is the common side of triangular elements $T_1, T_2\in\mathcal M_h$,
we define the jump residual across $e$ as
\[
J_e =-\big(\nabla u^h|_{T_1}\cdot \nu_1+\nabla u^h|_{T_2}\cdot
\nu_2\big), 
\]
where $\nu_{j}$ is the unit outward normal vector on the boundary of $T_j, j=1,
2$. For any boundary edge $e\subset \Gamma_R^+$, the jump residual is defined as
\[
J_e=2\big(B_{\rm TM}^N u^h-\nabla u^h\cdot \nu-f\big).
\]
For any triangle $T\in\mathcal M_h$, denote by $\eta_T$ the local error
estimator as follows: 
\[
\eta_T=h_T \|H_{\rm TM}
u^h\|_{L^2(T)}+\bigg(\frac{1}{2}\sum\limits_{e\in\partial T}h_e
\|J_e\|_{L^2(e)}^2\bigg)^{1/2}
\]
where $H_{\rm TM}$ is the Helmholtz operator defined by $H_{\rm TM} u=\Delta
u+\kappa^2 u$.

Let $\xi=u-u^h$, where $u$ and $u_h$ are the solutions of the variational
problems \eqref{TM_v1} and \eqref{TM_v2}, respectively. Introduce a dual
problem: find $w\in H_0^1(\Omega)$ such that 
\begin{equation}\label{TM_dual}
a_{\rm TM}(v, w)=\int_{\Omega} v \bar{\xi}\,{\rm d}x\quad \forall v\in
H_0^1(\Omega).
\end{equation}
It is easy to check that $w$ is the solution of the following boundary value
problem: 
\begin{equation*}
\begin{cases}
\Delta w+\kappa^2 w=-\xi\quad & {\rm in} ~ \Omega,\\
w=0 \quad & {\rm on} ~ S\cup \Gamma_g,\\
\partial_{\rho} w=B_{\rm TM}^* w \quad & {\rm on} ~  
\Gamma_R^+,
\end{cases}
\end{equation*}
where $B_{\rm TM}^*$ is the adjoint operator of $B_{\rm TM}$ and is given by 
\[
B_{\rm TM}^* u=\kappa_0\sum\limits_{n=1}^{\infty} 
\overline{\left(\frac{H_n^{(1)'}(\kappa_0 R)}{H_n^{(1)}(\kappa_0 R)}\right)} a_n
\sin n\phi,\quad a_n=\frac{2}{\pi}\int_{0}^{\pi} u(R, \phi)\sin
n\phi\,{\rm d}\phi. 
\]

The following three lemmas are proved in \cite{JLLZ-JSC-2017}, where the first
lemma concerns the well-posedness of the dual problem, the second lemma gives
the trace result in $H_0^1(\Omega)$, and the third lemma shows the error
representation formulas. 

\begin{lemma}\label{Thm_TM_dual}
The dual problem \eqref{TM_dual} has a unique solution $w\in H^1_0(\Omega)$,
which satisfies the estimate
\[
\|w\|_{H^1(\Omega)}\lesssim \|\xi\|_{L^2(\Omega)}.
\] 
\end{lemma}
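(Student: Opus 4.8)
The plan is to recognize that, after taking complex conjugates, the dual problem \eqref{TM_dual} is a variational problem of exactly the same structure as the primal problem \eqref{TM_v1}, and then to rerun the Fredholm-type argument underlying its well-posedness. First I would conjugate \eqref{TM_dual} and use the adjoint identity $\langle B_{\rm TM}v,w\rangle_{\Gamma_R^+}=\langle v,B_{\rm TM}^*w\rangle_{\Gamma_R^+}$ to rewrite it as: find $w\in H_0^1(\Omega)$ such that
\[
a_{\rm TM}^*(w,v):=\int_\Omega\nabla w\cdot\nabla\bar v\,{\rm d}x-\int_\Omega\overline{\kappa^2}\,w\bar v\,{\rm d}x-\int_{\Gamma_R^+}(B_{\rm TM}^*w)\,\bar v\,{\rm d}s=\int_\Omega\xi\,\bar v\,{\rm d}x\quad\forall\,v\in H_0^1(\Omega),
\]
which is the weak form of the (adjoint) boundary value problem for $w$ stated above. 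Two observations make this tractable: the antilinear functional $\ell(v)=\int_\Omega\xi\bar v\,{\rm d}x$ is bounded on $H_0^1(\Omega)$ with $\|\ell\|\le\|\xi\|_{L^2(\Omega)}$ by Cauchy--Schwarz; and $B_{\rm TM}^*:H_{\rm TM}^{1/2}(\Gamma_R^+)\to H_{\rm TM}^{-1/2}(\Gamma_R^+)$ is continuous, since its Fourier symbols are the complex conjugates of those of $B_{\rm TM}$, so continuity is inherited from \cite[Lemma~3.1]{A-JCP-2006}.

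Next I would establish the well-posedness of this adjoint problem via the Fredholm alternative. Isolating the zeroth-order term yields a G\aa rding inequality $\Re a_{\rm TM}^*(v,v)\ge\|v\|_{H^1(\Omega)}^2-C\|v\|_{L^2(\Omega)}^2$: the principal part contributes $\|\nabla v\|_{L^2(\Omega)}^2$; the term $-\int_\Omega\overline{\kappa^2}|v|^2\,{\rm d}x$ is bounded below by $-C\|v\|_{L^2(\Omega)}^2$ because $\Re\kappa^2=\omega^2\epsilon\mu$ is bounded; and the boundary term carries the favorable sign, since $\Re\big(H_n^{(1)'}(\kappa_0R)/H_n^{(1)}(\kappa_0R)\big)<0$ for every $n$ (equivalently $t\mapsto|H_n^{(1)}(t)|^2$ is decreasing), so $-\Re\int_{\Gamma_R^+}(B_{\rm TM}^*v)\bar v\,{\rm d}s\ge0$. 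Because $H_0^1(\Omega)\hookrightarrow L^2(\Omega)$ is compact, the operator induced by $a_{\rm TM}^*$ is a compact perturbation of an isomorphism, hence Fredholm of index zero, and existence reduces to uniqueness.

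For uniqueness I would set $\xi=0$, test with $v=w$, and examine $\Im a_{\rm TM}^*(w,w)=0$. Since $\Im\overline{\kappa^2}=-\omega\mu\sigma\le0$ and, by the Wronskian identity, $\Im\big(H_n^{(1)'}(\kappa_0R)/H_n^{(1)}(\kappa_0R)\big)=2\big(\pi\kappa_0R\,|H_n^{(1)}(\kappa_0R)|^2\big)^{-1}>0$, this identity reduces to $\int_\Omega\omega\mu\sigma|w|^2\,{\rm d}x+\sum_{n\ge1}|a_n|^2|H_n^{(1)}(\kappa_0R)|^{-2}=0$, which forces all Fourier coefficients $a_n$ of $w|_{\Gamma_R^+}$ to vanish, i.e.\ $w=0$ on $\Gamma_R^+$; then $B_{\rm TM}^*w=0$, hence also $\partial_\rho w=0$ on $\Gamma_R^+$. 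Thus $w$ has vanishing Cauchy data on $\Gamma_R^+$ while solving a homogeneous Helmholtz equation in the homogeneous part of $\Omega$, and unique continuation---propagated through the inhomogeneous cavity---gives $w\equiv0$. (Equivalently, uniqueness for the adjoint problem is exactly existence for \eqref{TM_v1}, which holds by its well-posedness theorem.) The Fredholm alternative then produces a unique $w$ with a bounded solution operator, so $\|w\|_{H^1(\Omega)}\lesssim\|\ell\|\le\|\xi\|_{L^2(\Omega)}$.

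The one genuinely delicate point is the \emph{uniqueness} step: the Helmholtz operator is indefinite, so coercivity is unavailable and one must exploit the radiation character encoded in $B_{\rm TM}^*$ together with a unique continuation principle across the inhomogeneity---precisely the subtle ingredient already settled for the forward problem in \cite{Li-JCM-2018}. Everything else (the G\aa rding inequality, the compact perturbation, and the a priori bound in terms of $\|\xi\|_{L^2(\Omega)}$) is routine. In fact, once one knows that $a_{\rm TM}$ induces an isomorphism from $H_0^1(\Omega)$ onto its antidual, the conclusion is immediate, because the adjoint of an isomorphism is an isomorphism whose inverse has the same norm and $\|\ell\|\le\|\xi\|_{L^2(\Omega)}$.
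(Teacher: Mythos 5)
Your argument is correct, and it is essentially the standard route. The paper itself does not prove this lemma; it simply cites \cite{JLLZ-JSC-2017}, where the well-posedness of the dual (adjoint) problem is established by exactly the ingredients you use: a G\aa rding inequality exploiting $\Re\big(H_n^{(1)'}(\kappa_0R)/H_n^{(1)}(\kappa_0R)\big)<0$, compactness of $H_0^1(\Omega)\hookrightarrow L^2(\Omega)$ giving a Fredholm alternative, uniqueness via $\Im\big(H_n^{(1)'}(\kappa_0R)/H_n^{(1)}(\kappa_0R)\big)>0$ (Wronskian identity) plus unique continuation, and the observation that the a priori bound follows with $\|\ell\|\le\|\xi\|_{L^2(\Omega)}$. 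Your closing remark is the cleanest summary: once Theorem 3.1 says $a_{\rm TM}$ induces an isomorphism, the adjoint is automatically an isomorphism with the same inverse norm, so the stated estimate is immediate. All the sign computations you record ($\Re\kappa^2=\omega^2\epsilon\mu$, $\Im\kappa^2=\omega\mu\sigma\ge 0$, and the monotonicity of $t\mapsto|H_n^{(1)}(t)|^2$) are accurate.
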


\begin{lemma}\label{Lema_TM_a}
For any $u\in H_0^{1}(\Omega)$, the following estimates hold: 
\[
\|u\|_{H^{1/2}(\Gamma_R^+)}\lesssim \|u\|_{H^{1}(\Omega)}, \quad
\|u\|_{H^{1/2}(\Gamma_{\hat R}^+)}\lesssim \|u\|_{H^{1}(\Omega)}.
\]
\end{lemma}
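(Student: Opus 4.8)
The plan is to derive both inequalities from the classical trace theorem for Sobolev functions on bounded Lipschitz domains, applied first to $\Omega$ itself and then to the interior half-disc $B_{\hat R}^+$. Recall that, under the standing assumption that $S$ and $\Gamma_R^+$ are polygonal, $\Omega=B_R^+\cup D$ is a bounded Lipschitz domain, and $\Gamma_R^+$ is a finite union of segments that forms a relatively open, Lipschitz portion of $\partial\Omega$.

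First I would treat $\Gamma_R^+$. Using a finite partition of unity subordinate to a cover of $\overline{\Gamma_R^+}$ by patches in which $\partial\Omega$ is the graph of a Lipschitz function, I flatten the boundary, extend $u\in H^1(\Omega)$ to an $H^1$ function across the flattened boundary, and invoke the boundedness of the half-space trace map $H^1(\mathbb R^2_+)\to H^{1/2}(\mathbb R)$; summing the local estimates gives $\|u\|_{H^{1/2}(\Gamma_R^+)}\lesssim\|u\|_{H^1(\Omega)}$ with the intrinsic Sobolev--Slobodeckij norm on the arc. To pass to the Fourier-sine scale of norms $H^s_{\rm TM}(\Gamma_R^+)$ used elsewhere in the paper, I would use that $u=0$ on $\Gamma_g$ and that the endpoints $(\pm R,0)$ of the semicircle lie on $\Gamma_g$: hence the trace of $u$ on $\Gamma_R^+$ extends by zero to the neighbouring boundary pieces, so its odd $2\pi$-periodic extension lies in $H^{1/2}$ of the full circle, and the Fourier characterization $\|\,\widetilde{u(R,\cdot)}\,\|_{H^{1/2}(\mathrm{circle})}^2\simeq\sum_{n\ge 1}(1+n^2)^{1/2}|a_n|^2=\|u\|_{H^{1/2}_{\rm TM}(\Gamma_R^+)}^2$ closes the estimate.

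For $\Gamma_{\hat R}^+$ I would exploit that $\hat R<R$ forces $B_{\hat R}^+\subset B_R^+\subset\Omega$, so $B_{\hat R}^+$ is a bounded Lipschitz subdomain of $\Omega$ whose boundary contains the arc $\Gamma_{\hat R}^+$. Applying the trace theorem on $B_{\hat R}^+$ and restricting to $\Gamma_{\hat R}^+$ yields $\|u\|_{H^{1/2}(\Gamma_{\hat R}^+)}\lesssim\|u\|_{H^1(B_{\hat R}^+)}\le\|u\|_{H^1(\Omega)}$; if the sine-series norm is intended here as well, the reflection argument of the previous paragraph applies verbatim, again because $u=0$ on $\Gamma_g$ at the endpoints $(\pm\hat R,0)$.

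The only genuine (and minor) obstacle is the norm bookkeeping at the borderline exponent $s=1/2$: the scale $H^s_{\rm TM}$ is a Lions--Magenes-type $H^s_{00}$ norm, which is strictly stronger than the intrinsic $H^{1/2}$ norm on the arc, so one cannot simply quote equivalence of norms as one could for $s<1/2$. This is precisely why the boundary condition $u=0$ on $\Gamma_g$ enters — it guarantees that the trace vanishes in the $H^{1/2}_{00}$ sense at the junctions where $\Gamma_R^+$ and $\Gamma_{\hat R}^+$ meet the ground plane, which is what legitimizes the zero extension / odd reflection onto the circle with controlled norm. Everything else reduces to the standard trace theorem together with the localization argument.
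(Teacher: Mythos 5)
Your proof is correct, but it is worth noting that the paper does not actually prove this lemma: it is quoted from \cite{JLLZ-JSC-2017}, where the corresponding trace estimate is obtained by a direct Fourier-mode computation rather than by the abstract trace theorem. That route --- the natural one given that $\|\cdot\|_{H^{1/2}_{\rm TM}(\Gamma_R^+)}$ is already defined spectrally --- expands $u(r,\phi)=\sum_{n\ge1}a_n(r)\sin n\phi$ for $r$ in an annulus below $R$, applies the one-dimensional trace inequality $|a_n(R)|^2\lesssim\|a_n\|_{L^2}^2+\|a_n\|_{L^2}\|a_n'\|_{L^2}$ mode by mode, multiplies by $(1+n^2)^{1/2}$, uses Young's inequality and sums; the condition $u=0$ on $\Gamma_g$ enters there only to justify term-by-term differentiation of the sine series, i.e. $\sum_n n^2\|a_n\|_{L^2}^2\lesssim\|\partial_\phi u\|_{L^2}^2$. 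Your alternative --- classical trace theorem plus zero extension and odd reflection --- reaches the same conclusion, and your diagnosis of the one genuine subtlety is exactly right: the sine-series $H^{1/2}$ scale is an $H^{1/2}_{00}$-type norm (the constant function already fails to belong to it), so the intrinsic trace estimate alone cannot suffice and the Dirichlet condition on $\Gamma_g$ is essential. The price of your route is the bookkeeping you yourself flag (vanishing of the trace at the junction points in the $H^{1/2}_{00}$ sense, equivalence of the reflected norm with the Fourier norm on the circle), plus one small correction for the second estimate: on $\partial B_{\hat R}^+$ the trace does \emph{not} vanish on the whole diameter, since the aperture $\Gamma$ lies there; it vanishes only near the endpoints $(\pm\hat R,0)$, which is all the reflection argument actually needs, so the conclusion stands but ``verbatim'' overstates it slightly. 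The mode-by-mode computation is shorter here precisely because the target norm is already spectral; your argument buys independence from the explicit separation of variables.
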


\begin{lemma}\label{Lema_TM_b}
Let $u, u_h$, and $w$ be the solutions to the problems \eqref{TM_v1}, 
\eqref{TM_v2}, and \eqref{TM_dual}, respectively. The following identities hold:
\begin{eqnarray*}
&&\|\xi\|^2_{H^1(\Omega)}=\Re\Big(a_{\rm TM}(\xi, \xi)+\langle(B_{\rm
TM}^N-B_{\rm TM})\xi, \xi\rangle_{ \Gamma_R^+}\Big) + \Re\langle B^N_{\rm TM}
\xi, \xi\rangle_{\Gamma_R^+}+\Re\int_{\Omega}(\kappa^2+1)|\xi|^2{\rm d}x,\\
&&\|\xi\|^2_{L^2(\Omega)}=a_{\rm TM}(\xi, w)+\langle(B_{\rm
TM}-B_{\rm TM}^N)\xi, w\rangle_{\Gamma_R^+}
-\langle(B_{\rm TM}-B_{\rm TM}^N)\xi, w\rangle_{\Gamma_R^+},\\
&&a_{\rm TM}(\xi, \psi)+\langle(B_{\rm TM}-B^N_{\rm
TM})\xi, \psi\rangle_{\Gamma_R^+}=\int_{\Gamma_R^+}
f (\overline{\psi-\psi_h})\,{\rm d}s-a_{\rm TM}^N(u^h, \psi-\psi^h)\notag\\
&&\qquad\qquad\quad  +\langle (B_{\rm TM} -B^N_{\rm TM})u,
\psi\rangle_{\Gamma_R^+}\quad\forall \,\psi\in H^1_0(\Omega),  \psi_h\in V_h. 
\end{eqnarray*}
\end{lemma}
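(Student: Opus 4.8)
The plan is to obtain all three identities by elementary algebra, using only the definitions of $a_{\rm TM}$ and $a_{\rm TM}^N$, the variational equations \eqref{TM_v1}, \eqref{TM_v2}, \eqref{TM_dual}, and the splitting $\xi=u-u^h$; no new regularity input is required.

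For the first identity I would begin from the definition
\[
a_{\rm TM}(\xi,\xi)=\int_{\Omega}|\nabla\xi|^2\,{\rm d}x-\int_{\Omega}\kappa^2|\xi|^2\,{\rm d}x-\langle B_{\rm TM}\xi,\xi\rangle_{\Gamma_R^+},
\]
solve for the Dirichlet energy $\int_{\Omega}|\nabla\xi|^2\,{\rm d}x$, and add $\int_{\Omega}|\xi|^2\,{\rm d}x$ to both sides so that $\|\xi\|_{H^1(\Omega)}^2$ appears on the left. Since the left-hand side is real, one may pass to the real part on the right, and then split the DtN contribution via $\langle B_{\rm TM}\xi,\xi\rangle_{\Gamma_R^+}=\langle B_{\rm TM}^N\xi,\xi\rangle_{\Gamma_R^+}+\langle(B_{\rm TM}-B_{\rm TM}^N)\xi,\xi\rangle_{\Gamma_R^+}$ and regroup to recover the stated expression; the only thing requiring attention here is the sign bookkeeping for $B_{\rm TM}-B_{\rm TM}^N$. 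The second identity is essentially free: taking $v=\xi$ in the dual equation \eqref{TM_dual} gives $a_{\rm TM}(\xi,w)=\int_{\Omega}|\xi|^2\,{\rm d}x=\|\xi\|_{L^2(\Omega)}^2$, and one merely adds and subtracts $\langle(B_{\rm TM}-B_{\rm TM}^N)\xi,w\rangle_{\Gamma_R^+}$ to write it in the displayed form (a device that becomes useful later, when the two copies are estimated by different means).

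For the third identity I would write $a_{\rm TM}(\xi,\psi)=a_{\rm TM}(u,\psi)-a_{\rm TM}(u^h,\psi)$, replace the first term by $\int_{\Gamma_R^+}f\,\overline{\psi}\,{\rm d}s$ using \eqref{TM_v1}, and note that $a_{\rm TM}(u^h,\psi)-a_{\rm TM}^N(u^h,\psi)=-\langle(B_{\rm TM}-B_{\rm TM}^N)u^h,\psi\rangle_{\Gamma_R^+}$ since the two forms differ only in the DtN term. Using the discrete equation \eqref{TM_v2} in the form $a_{\rm TM}^N(u^h,\psi_h)=\int_{\Gamma_R^+}f\,\overline{\psi_h}\,{\rm d}s$ to rewrite $a_{\rm TM}^N(u^h,\psi)=a_{\rm TM}^N(u^h,\psi-\psi_h)+\int_{\Gamma_R^+}f\,\overline{\psi_h}\,{\rm d}s$ then yields
\[
a_{\rm TM}(\xi,\psi)=\int_{\Gamma_R^+}f\,\overline{(\psi-\psi_h)}\,{\rm d}s-a_{\rm TM}^N(u^h,\psi-\psi_h)+\langle(B_{\rm TM}-B_{\rm TM}^N)u^h,\psi\rangle_{\Gamma_R^+}.
\]
Finally, adding $\langle(B_{\rm TM}-B_{\rm TM}^N)\xi,\psi\rangle_{\Gamma_R^+}=\langle(B_{\rm TM}-B_{\rm TM}^N)u,\psi\rangle_{\Gamma_R^+}-\langle(B_{\rm TM}-B_{\rm TM}^N)u^h,\psi\rangle_{\Gamma_R^+}$ to both sides cancels the $u^h$ boundary term and leaves precisely the claimed formula, valid for every $\psi\in H_0^1(\Omega)$ and every $\psi_h\in V_h$.

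I do not expect a genuine obstacle in this lemma: everything is a rearrangement. The one point to be careful about is distinguishing the roles of $B_{\rm TM}$, $B_{\rm TM}^N$, and $B_{\rm TM}^*$ — the adjoint operator enters only through the strong form of the dual problem and is never used in deriving the identities — together with the signs attached to $B_{\rm TM}-B_{\rm TM}^N$; beyond that the identities are purely formal, and their purpose is to decompose $\|\xi\|_{H^1(\Omega)}$ and $\|\xi\|_{L^2(\Omega)}$ into a local finite element residual part, a DtN-truncation part carrying the factor $(B_{\rm TM}-B_{\rm TM}^N)u$, and a duality contribution, each of which is bounded separately in the a posteriori analysis that follows.
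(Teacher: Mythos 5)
Your proposal is correct, and it is essentially the standard argument: the paper itself gives no proof here (it defers to the reference \cite{JLLZ-JSC-2017}), and your elementary verification — unpacking the definitions of $a_{\rm TM}$ and $a_{\rm TM}^N$, using \eqref{TM_v1} with $v=\psi$, \eqref{TM_v2} with $v_h=\psi_h$, and \eqref{TM_dual} with $v=\xi$, then splitting the boundary terms — is exactly what that reference does. Your handling of the second identity (it is just $a_{\rm TM}(\xi,w)=\|\xi\|_{L^2(\Omega)}^2$ with a term added and subtracted) and of the third identity (where the $\langle(B_{\rm TM}-B_{\rm TM}^N)u^h,\psi\rangle_{\Gamma_R^+}$ contributions cancel) is right.

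One point you gloss over under ``sign bookkeeping'' deserves to be made explicit: carrying out your computation for the first identity gives
\[
\|\xi\|^2_{H^1(\Omega)}=\Re\Big(a_{\rm TM}(\xi,\xi)+\langle(B_{\rm TM}-B_{\rm TM}^N)\xi,\xi\rangle_{\Gamma_R^+}\Big)+\Re\langle B_{\rm TM}^N\xi,\xi\rangle_{\Gamma_R^+}+\Re\int_{\Omega}(\kappa^2+1)|\xi|^2\,{\rm d}x,
\]
i.e.\ with $B_{\rm TM}-B_{\rm TM}^N$ inside the first real part, whereas the statement prints $B_{\rm TM}^N-B_{\rm TM}$; with the printed sign the right-hand side equals $\|\xi\|^2_{H^1(\Omega)}+2\Re\langle(B_{\rm TM}^N-B_{\rm TM})\xi,\xi\rangle_{\Gamma_R^+}$, so you cannot literally ``recover the stated expression.'' The version your algebra produces is the correct one: it is the combination $a_{\rm TM}(\xi,\cdot)+\langle(B_{\rm TM}-B_{\rm TM}^N)\xi,\cdot\rangle_{\Gamma_R^+}$ that reappears in the third identity (and is what the a posteriori estimate needs when one takes $\psi=\xi$ there), so the printed sign is a typo in the statement rather than a gap in your proof. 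Everything else checks out.
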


The following result concerns the truncation error of the DtN operator and plays
an important role in the a posteriori error estimate. 

\begin{lemma}\label{TM_lemmatotal}
Let $u$ be the solution to \eqref{TM_v1} and $\psi$ be any function in
$H^1_0(\Omega)$. For sufficiently large $N$, the following estimate holds: 
\[
\left|\langle(B_{\rm TM}-B_{\rm TM}^N)u,
\psi\rangle_{\Gamma_R^+}\right|\lesssim 
\bigg[\Big(\frac{\hat{R}}{R}\Big)^N+\Big(\frac{e\kappa_0
R}{2N}\Big)^{2N+4}\bigg]
\|u^{\rm ref}\|_{H^1(\Omega)}\|\psi\|_{H^1(\Omega)}.
\]
\end{lemma}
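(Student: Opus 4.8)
\medskip\noindent\textbf{Proof strategy.} The plan is to write $u=u^{\rm ref}+u^{\rm s}$ and to estimate the two contributions to $\langle(B_{\rm TM}-B_{\rm TM}^N)u,\psi\rangle_{\Gamma_R^+}$ separately. Since $B_{\rm TM}-B_{\rm TM}^N$ is diagonal in the Fourier sine modes on $\Gamma_R^+$, writing $a_n^{\rm ref},a_n^{\rm s},b_n$ for the Fourier sine coefficients of $u^{\rm ref},u^{\rm s},\psi$ there, one has
\[
\langle(B_{\rm TM}-B_{\rm TM}^N)u,\psi\rangle_{\Gamma_R^+}
=\frac{\pi R\kappa_0}{2}\sum_{n>N}
\frac{H_n^{(1)'}(\kappa_0 R)}{H_n^{(1)}(\kappa_0 R)}\bigl(a_n^{\rm ref}+a_n^{\rm s}\bigr)\overline{b_n},
\]
so, the prefactor being a fixed constant, it suffices to bound the $a_n^{\rm s}$-tail and the $a_n^{\rm ref}$-tail separately.

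For the scattered field, I would use that $u^{\rm s}$ solves $\Delta u^{\rm s}+\kappa_0^2u^{\rm s}=0$ in $\mathbb R^2_+\setminus B_{\hat R}^+$ with $u^{\rm s}=0$ on $\Gamma_g$ and the Sommerfeld condition, so $u^{\rm s}(r,\phi)=\sum_{n\ge1}c_nH_n^{(1)}(\kappa_0 r)\sin n\phi$ for all $r\ge\hat R$. Hence $a_n^{\rm s}=c_nH_n^{(1)}(\kappa_0 R)$, the coefficients of $u^{\rm s}$ on $\Gamma_{\hat R}^+$ are $\hat a_n^{\rm s}=c_nH_n^{(1)}(\kappa_0\hat R)$, and $\frac{H_n^{(1)'}(\kappa_0 R)}{H_n^{(1)}(\kappa_0 R)}a_n^{\rm s}=\frac{H_n^{(1)'}(\kappa_0 R)}{H_n^{(1)}(\kappa_0\hat R)}\hat a_n^{\rm s}$. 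The large-order asymptotics of Hankel functions at fixed argument ($H_n^{(1)}(z)\sim-\frac{{\rm i}}{\pi}\Gamma(n)(2/z)^{n}$, $H_n^{(1)'}(z)\sim\frac{{\rm i}}{\pi}\Gamma(n+1)(2/z)^{n}/z$) give a bound $\bigl|H_n^{(1)'}(\kappa_0 R)/H_n^{(1)}(\kappa_0\hat R)\bigr|\lesssim n(\hat R/R)^{n}$, uniform for large $n$. Combining this with the Cauchy--Schwarz inequality, the elementary estimate $\sup_{n>N}n^{2}(\hat R/R)^{2n}(1+n^{2})^{-1}\le(\hat R/R)^{2N}$, and Lemma~\ref{Lema_TM_a} on $\Gamma_{\hat R}^+$ and on $\Gamma_R^+$, I obtain a bound $\lesssim(\hat R/R)^{N}\|u^{\rm s}\|_{H^{1}(\Omega)}\|\psi\|_{H^{1}(\Omega)}$. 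Finally $\|u^{\rm s}\|_{H^{1}(\Omega)}\le\|u\|_{H^{1}(\Omega)}+\|u^{\rm ref}\|_{H^{1}(\Omega)}\lesssim\|f\|_{H^{-1/2}(\Gamma_R^+)}+\|u^{\rm ref}\|_{H^{1}(\Omega)}$ by the well-posedness estimate for \eqref{TM_v1}, while $\|f\|_{H^{-1/2}(\Gamma_R^+)}\lesssim\|u^{\rm ref}\|_{H^{1}(\Omega)}$ follows from $f=\partial_\rho u^{\rm ref}-B_{\rm TM}u^{\rm ref}$, the continuity of $B_{\rm TM}$, Lemma~\ref{Lema_TM_a}, and the trace estimate $\|\partial_\rho u^{\rm ref}\|_{H^{-1/2}(\Gamma_R^+)}\lesssim\|u^{\rm ref}\|_{H^{1}(B_R^+)}$ for the Helmholtz solution $u^{\rm ref}$; this gives the $(\hat R/R)^{N}$ term.

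For the reference field, \eqref{TM_ref} gives $a_n^{\rm ref}=4{\rm i}^{n}J_n(\kappa_0 R)\sin n(\theta-\pi/2)$, so $|a_n^{\rm ref}|\lesssim|J_n(\kappa_0 R)|$. Using $\bigl|\kappa_0H_n^{(1)'}(\kappa_0 R)/H_n^{(1)}(\kappa_0 R)\bigr|\lesssim n$, the super-exponential bound $|J_n(\kappa_0 R)|\le(\kappa_0 R/2)^{n}/n!\le(2\pi n)^{-1/2}(e\kappa_0 R/(2n))^{n}$ (Stirling's formula), Cauchy--Schwarz, the trace bound $\|\psi\|_{H^{1/2}(\Gamma_R^+)}\lesssim\|\psi\|_{H^{1}(\Omega)}$, and the crude lower bound $\|u^{\rm ref}\|_{H^{1}(\Omega)}\gtrsim\|J_1(\kappa_0 r)\sin\phi\|_{L^{2}(B_R^+)}\gtrsim1$ (valid since $\theta\in(-\pi/2,\pi/2)$ makes the first Fourier--Bessel mode of $u^{\rm ref}$ nontrivial and the modes are orthogonal over $B_R^+\subset\Omega$), a careful estimate of the resulting rapidly convergent tail---whose sum is comparable to its leading term---produces a super-exponentially small factor of the form $(e\kappa_0 R/(2N))^{2N+4}$, i.e.\ a bound $\lesssim(e\kappa_0 R/(2N))^{2N+4}\|u^{\rm ref}\|_{H^{1}(\Omega)}\|\psi\|_{H^{1}(\Omega)}$. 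Adding the two contributions proves the lemma.

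I expect the main obstacle to be the two uniform Hankel/Bessel estimates: one must turn the large-order asymptotics (fixed argument, order $\to\infty$) into bounds that are uniform in \emph{all} $n>N$, and for the reference part one must track the Stirling constants carefully enough to extract the stated super-exponential rate. Once these are in hand the geometric rate $\hat R/R$ of the scattered part is immediate, and the auxiliary estimate $\|f\|_{H^{-1/2}(\Gamma_R^+)}\lesssim\|u^{\rm ref}\|_{H^{1}(\Omega)}$ is routine.
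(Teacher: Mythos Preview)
Your proposal is correct and follows essentially the same route as the paper: split $u=u^{\rm ref}+u^{\rm s}$, bound the scattered tail via the Hankel expansion of $u^{\rm s}$ in the annulus $\hat R\le r\le R$ together with the large-order asymptotics $|H_n^{(1)'}(\kappa_0 R)/H_n^{(1)}(\kappa_0\hat R)|\lesssim n(\hat R/R)^n$ (this is precisely the content of the cited \cite[Lemma~4]{JLLZ-JSC-2017}), bound the reference tail via the explicit Jacobi--Anger coefficients and the Stirling estimate for $J_n$, and finally reduce $\|u^{\rm s}\|_{H^1(\Omega)}$ to $\|u^{\rm ref}\|_{H^1(\Omega)}$ through the well-posedness estimate and $\|f\|_{H^{-1/2}}\lesssim\|u^{\rm ref}\|_{H^1}$. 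The only cosmetic differences are that the paper handles $\|\partial_\rho u^{\rm ref}\|_{H^{-1/2}}$ by an explicit Fourier--Bessel computation using $J_n'/J_n\sim n/z$ rather than your abstract Neumann trace argument, and inserts the factor $\|u^{\rm ref}\|_{H^1(\Omega)}$ somewhat more opaquely than your lower-bound argument; neither changes the substance.
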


\begin{proof}
By \eqref{tf}, we have $u=u^{\rm s}+u^{\rm ref}$, where $u^{\rm ref}$ is the
reference field and $u^{\rm s}$ is the scattered field satisfying the Sommerfeld
radiation condition. For sufficiently large $N$, it is shown in \cite[Lemma
4]{JLLZ-JSC-2017} that
\begin{eqnarray*}
|\langle(B_{\rm TM}-B_{\rm TM}^N)u^{\rm s}, \psi\rangle_{\Gamma_R^+}|\lesssim 
\Big(\frac{\hat R}{R}\Big)^N \|u^{\rm
s}\|_{H^{1/2}(\Gamma_R^+)}\|\psi\|_{H^{1/2}(\Gamma_R^+)}.
\end{eqnarray*}

A straightforward calculation yields 
\begin{eqnarray*}%\label{TM_ref_1}
(B_{\rm TM}-B_{\rm TM}^{N})u^{\rm ref}=
\kappa_0 \sum\limits_{n=N+1}^{\infty}\frac{H_n^{(1)'}(\kappa_0 R)}{
H_n^{(1)}(\kappa_0 R)}\left[4 {\rm i}^n J_n(\kappa_0 R) \sin
n(\theta-\frac{\pi}{2})\right] \sin n\phi.
\end{eqnarray*}
By \cite{w-22}, for sufficiently large $n$, we have 
\[
J_n(z)\sim \frac{1}{\sqrt{2\pi n}}\left(\frac{ez}{2n}\right)^n, \quad 
\left|\frac{H_n^{(1)'}(\kappa_0 R)}{H_n^{(1)}(\kappa_0 R)}\right|\lesssim n,
\]
which give
\begin{eqnarray*}
	&& |\langle(B_{\rm TM}-B_{\rm TM}^{N})u^{\rm ref}, \psi\rangle|
		=\frac{\pi}{2}\kappa_0\left|\sum\limits_{n=N+1}^{\infty}\frac{H_n^{(1)'}(\kappa_0 R)}{
		H_n^{(1)}(\kappa_0 R)}\left[4{\rm i}^n J_n(\kappa_0 R) \sin
n(\theta-\frac{\pi}{2})\right] \hat{\psi}_n(R)\right|\\
	&& \leq 2\pi\kappa_0 \sum\limits_{n=N+1}^{\infty}
		\left|\frac{H_n^{(1)'}(\kappa_0 R)}{H_n^{(1)}(\kappa_0 R)}\right|
		\left| J_n(\kappa_0 R)\right| | \hat{\psi}_n(R)|\\
	&& \lesssim 2\pi\kappa_0\sum\limits_{n=N+1}^{\infty}
		n\frac{1}{\sqrt{2\pi n}}\left(\frac{e\kappa_0 R}{2n}\right)^n
		|\hat{\psi}_n(R)|\\
	&& \lesssim \sqrt{2\pi}\kappa_0\left\{\sum\limits_{n=N+1}^{\infty}
		\left[\sqrt{n}\frac{1}{\sqrt{2\pi n}}
		\left(\frac{e\kappa_0 R}{2n}\right)^n\right]^2\right\}^{1/2}
		\left\{\sum\limits_{n=N+1}^{\infty} 2\pi \left(1+n^2\right)^{1/2}
		|\hat{\psi}_n(R)|^2\right\}^{1/2}\\
	&& \lesssim \kappa_0 \left\{\sum\limits_{n=N+1}^{\infty}
		\left(\frac{e\kappa_0 R}{2n}\right)^{2n}\right\}^{1/2}
		\|\psi\|_{H^{1/2}(\Gamma_R^+)}.
\end{eqnarray*}
For $N>\frac{e\kappa_0 R}{2}$, it is easy to verify
\[
\sum\limits_{n=N+1}^{\infty}
\left(\frac{e\kappa_0 R}{2n}\right)^{2n}\leq\sum\limits_{n=N+1}^{\infty}
\left(\frac{e\kappa_0 R}{2N}\right)^{2n}=
\frac{\left(\frac{e\kappa_0 R}{2N}\right)^{2N+4}}
{1-\left(\frac{e\kappa_0 R}{2N}\right)^{2}}.
\]
Hence
\begin{eqnarray*}
 |\langle(B_{\rm TM}-B_{\rm TM}^{N})u^{\rm ref},
\psi\rangle|
&\lesssim& \kappa_0 \frac{\left(\frac{e\kappa_0 R}{2N}\right)^{2N+4}}
	{1-\left(\frac{e\kappa_0 R}{2N}\right)^{2}}\sqrt{2\pi R^2}
	\sqrt{\frac{1}{2\pi R^2}}\|\psi\|_{H^{1/2}(\Gamma_R^+)}\\
& \lesssim& \frac{\kappa_0}{\sqrt{2\pi} R}	
	\frac{\left(\frac{e\kappa_0 R}{2N}\right)^{2N+4}}
	{1-\left(\frac{e\kappa_0 R}{2N}\right)^{2}}\|u^{\rm ref}\|_{H^1(B_R^+)}
	\|\psi\|_{H^1(\Omega)}\\
&\leq& \frac{\kappa_0}{\sqrt{2\pi} R}	
	\frac{1}{1-\left(\frac{e\kappa_0 R}{2N}\right)^{2}}
	\left(\frac{e\kappa_0 R}{2N}\right)^{2N+4}\|u^{\rm ref}\|_{H^1(\Omega)}
		\|\psi\|_{H^1(\Omega)}.
\end{eqnarray*}
Combining the above estimates, we obtain 
\[
|\langle(B_{\rm TM}-B_{\rm TM}^{N})u,
\psi\rangle|\lesssim \left(\frac{R'}{R}\right)^N 
\|u^{\rm s}\|_{H^{1/2}(\Gamma_R^+)}\|\psi\|_{H^{1}(\Omega)}
+\left(\frac{e\kappa_0 R}{2N}\right)^{2N+4}\|u^{\rm ref}\|_{H^1(\Omega)}
\|\psi\|_{H^1(\Omega)}.	
\]

Since 
\[
\|u^s\|_{H^1(\Omega)}=\|u-u^{\rm ref}\|_{H^1(\Omega)}\leq
\|u\|_{H^1(\Omega)}+\|u^{\rm ref}\|_{H^1(\Omega)}
\]
and 
\[
\|u\|_{H^1(\Omega)}\lesssim \|f\|_{H^{-1/2}(\Gamma_R^+)},
\]
it suffices to estimate $f$. A simple calculation yields 
\begin{eqnarray*}
\|f\|_{H^{-1/2}(\Gamma_R^+)} &\leq& 
\|\partial_{\rho} u^{\rm ref}\|_{H^{-1/2}(\Gamma_R^+)}
+\|B_{\rm TM} u^{\rm ref}\|_{H^{-1/2}(\Gamma_R^+)}\\
&\lesssim& \|\partial_{\rho} u^{\rm ref}\|_{H^{-1/2}(\Gamma_R^+)}
+\|u^{\rm ref}\|_{H^{1/2}(\Gamma_R^+)}.
\end{eqnarray*}
Taking the normal derivative of \eqref{TM_ref}, we get
\[
\partial_{\rho} u^{\rm ref}=4\sum\limits_{n=1}^{\infty}{\rm i}^n \kappa_0
J_n'(\kappa_0 R)\sin n(\theta-\frac{\pi}{2})\sin n\phi.
\]
It follows from the definition of the norm on $H^{-1/2}(\Gamma_R^+)$ that 
\begin{eqnarray*}
&&\|\partial_{\rho} u^{\rm ref}\|_{H^{-1/2}(\Gamma_R^+)}
\lesssim 2\pi \left\{\sum\limits_{n=1}^{\infty}\left(1+n^2\right)^{-1/2}
\kappa_0^2 \left|J_n'(\kappa_0 R)\sin n(\theta-\frac{\pi}{2})\right|^2
\right\}^{1/2}\\	
&&\lesssim 2\pi \left\{\sum\limits_{n=1}^{\infty}\left(1+n^2\right)^{-1/2}
\kappa_0^2 \left|\frac{J_n'(\kappa_0 R)}{J_n(\kappa_0 R)}
\right|^2 \left|J_n(\kappa_0 R)\right|^2\left|\sin
n(\theta-\frac{\pi}{2})\right|^2\right\}^{1/2}.
\end{eqnarray*}
For sufficiently large $n$, we have from the asymptotic
property of the Bessel function $J_n$ (cf. \cite{w-22}) that 
\[
\frac{J_n'(z)}{J_n(z)}=\frac{J_{n-1}(z)}{J_n(z)}-\frac{n}{z}
\sim \frac{\sqrt{n}}{\sqrt{n-1}}\left(\frac{n}{n-1}\right)^{n-1}
\frac{2n}{ez}-\frac{n}{z}\sim \frac{n}{z}.
\]
Hence
\begin{eqnarray*}
\|\partial_{\rho} u^{\rm ref}\|_{H^{-1/2}(\Gamma_R^+)}
&\lesssim& 2\pi \left\{\sum\limits_{n=1}^{\infty}\left(1+n^2\right)^{-1/2}
\kappa_0^2 n^2 \left|J_n(\kappa_0 R)\sin
n(\theta-\frac{\pi}{2})\right|^2\right\}^{1/2}\\	
&\lesssim& \left\{2\pi \sum\limits_{n=1}^{\infty}
\left(1+n^2\right)^{1/2}\kappa_0^2\left|J_n(\kappa_0 R)
\right|^2\left|\sin n(\theta-\frac{\pi}{2})\right|^2 \right\}^{1/2}\\
&=& \|u^{\rm ref}\|_{H^{1/2}(\Gamma_R^+)}.
\end{eqnarray*}
Noting
\[
\|f\|_{H^{-1/2}(\Gamma_R^+)}\lesssim \|u^{\rm
ref}\|_{H^{1/2}(\Gamma_R^+)}\lesssim 
\|u^{\rm ref}\|_{H^1(\Omega)},
\]
we complete the proof. 
\end{proof}

\begin{remark}
We notice that the result and proof of Lemma \ref{TM_lemmatotal} is 
different from those for the scattering problems in periodic structures
\cite{JLLWWZ, wbllw-sinum-2015, LY-CMAME-2020}. For the latter problems, the
DtN operators are defined on a straight line or plane surface and have only
finitely many terms when acting on the incident fields. For our case, the DtN
operator is defined on a semi-circle and is still an infinite series when
acting on the reference field, which results in an extra term in the estimate
given in Lemma \ref{TM_lemmatotal}. 
\end{remark}

\begin{lemma}
Let $w$ be the solution of the dual problem \eqref{TM_dual}. Then the
following estimate holds:
\[
|\langle(B_{\rm TM}-B_{\rm TM}^N)\xi,
w\rangle_{\Gamma_R^+}|\leq N^{-2} \|\xi\|_{H^1(\Omega)}^2.
\]
\end{lemma}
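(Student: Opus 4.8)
The plan is to turn the pairing into a Fourier series on $\Gamma_R^+$ and estimate it term by term, the whole difficulty being to show that the Fourier coefficients of the dual solution $w$ on $\Gamma_R^+$ decay fast enough. Write $\xi(R,\phi)=\sum_{n\ge1}a_n\sin n\phi$ and $w(R,\phi)=\sum_{n\ge1}b_n\sin n\phi$. By the definitions \eqref{TM_DtN}, \eqref{TM_tDtN} and the orthogonality of $\{\sin n\phi\}$ on $(0,\pi)$,
\[
\langle(B_{\rm TM}-B_{\rm TM}^N)\xi,w\rangle_{\Gamma_R^+}=\frac{\pi R\kappa_0}{2}\sum_{n=N+1}^{\infty}\frac{H_n^{(1)'}(\kappa_0 R)}{H_n^{(1)}(\kappa_0 R)}\,a_n\overline{b_n},
\]
so, using $|H_n^{(1)'}(\kappa_0 R)/H_n^{(1)}(\kappa_0 R)|\lesssim n$ exactly as in the proof of Lemma \ref{TM_lemmatotal}, it suffices to bound $\sum_{n>N}n|a_n||b_n|$. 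The coefficients of $\xi$ need no further work: since $\xi\in H_0^1(\Omega)$, Lemma \ref{Lema_TM_a} gives $\sum_{n\ge1}(1+n^2)^{1/2}|a_n|^2=\|\xi\|_{H^{1/2}(\Gamma_R^+)}^2\lesssim\|\xi\|_{H^1(\Omega)}^2$.

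To control the $b_n$ I would work in the shell $B_R^+\setminus\overline{B_{\hat R}^+}$, where $\kappa\equiv\kappa_0$ and $w$ satisfies $\Delta w+\kappa_0^2w=-\xi$, homogeneous Dirichlet on $\Gamma_g$, and $\partial_\rho w=B_{\rm TM}^*w$ on $\Gamma_R^+$. Separating variables, $w(r,\phi)=\sum_n w_n(r)\sin n\phi$ with $w_n''+r^{-1}w_n'+(\kappa_0^2-n^2/r^2)w_n=-\xi_n$ on $(\hat R,R)$, and the transparent condition at $r=R$ is precisely the statement that $w_n$ is the trace at $r=R$ of the outgoing‑conjugate solution on $(\hat R,\infty)$ (the one proportional to $H_n^{(2)}(\kappa_0 r)$ where the source vanishes). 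Hence $b_n=w_n(R)$ splits as $b_n=b_n^{\rm rad}+b_n^{\rm src}$. Part $b_n^{\rm rad}$ is the contribution of the Cauchy data of $w$ on $\Gamma_{\hat R}^+$ — bounded through $\|w\|_{H^1(\Omega)}\lesssim\|\xi\|_{L^2(\Omega)}$ by Lemmas \ref{Thm_TM_dual} and \ref{Lema_TM_a} — transported to $r=R$ with the factor $H_n^{(2)}(\kappa_0 R)/H_n^{(2)}(\kappa_0\hat R)$, which by the large‑order Hankel asymptotics is $O((\hat R/R)^n)$, exponentially small in $n$, handled just like the $u^{\rm s}$‑term of Lemma \ref{TM_lemmatotal}. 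Part $b_n^{\rm src}$ is the convolution of $\xi_n$ against the radial Green's function of the above ODE; for $n$ large that operator is dominated by multiplication by $-n^2/r^2$, so $|b_n^{\rm src}|\lesssim n^{-2}\|\xi_n\|_{L^\infty(\hat R,R)}$. Setting $d_n^2:=\|\xi_n'\|_{L^2(\hat R,R)}^2+n^2\|\xi_n\|_{L^2(\hat R,R)}^2$ one has $\sum_n d_n^2\lesssim\|\xi\|_{H^1(\Omega)}^2$, and since $\|\xi_n\|_{L^2}\le n^{-1}d_n$ and $\|\xi_n'\|_{L^2}\le d_n$, the one‑dimensional interpolation $\|\xi_n\|_{L^\infty}^2\lesssim\|\xi_n\|_{L^2}\|\xi_n'\|_{L^2}+\|\xi_n\|_{L^2}^2$ yields $\|\xi_n\|_{L^\infty(\hat R,R)}\lesssim n^{-1/2}d_n$, whence $|b_n^{\rm src}|\lesssim n^{-5/2}d_n$.

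Combining, the $b_n^{\rm rad}$‑part of $\sum_{n>N}n|a_n||b_n|$ is $O((\hat R/R)^N)$ times $\|\xi\|_{H^{1/2}(\Gamma_R^+)}\|w\|_{H^{1/2}(\Gamma_R^+)}$, hence exponentially small and in particular below $N^{-2}\|\xi\|_{H^1(\Omega)}^2$ for $N$ large; the $b_n^{\rm src}$‑part is $\sum_{n>N}n^{-3/2}|a_n|d_n\le\big(\sum_{n>N}n|a_n|^2\big)^{1/2}\big(\sum_{n>N}n^{-4}d_n^2\big)^{1/2}\le N^{-2}\|\xi\|_{H^{1/2}(\Gamma_R^+)}\big(\sum_n d_n^2\big)^{1/2}\lesssim N^{-2}\|\xi\|_{H^1(\Omega)}^2$, which is the claimed estimate. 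The step I expect to be the real obstacle is the source estimate: rigorously justifying, via a uniform‑in‑$n$ (WKB‑type) analysis of the radial ODE and its Green's function on $(\hat R,\infty)$ with the radiating condition at infinity and the conjugate DtN data at $r=R$, that $b_n^{\rm src}$ gains the full factor $n^{-2}$ on top of the $n^{-1/2}$ coming for free from the tangential regularity of $\xi$; the decomposition $b_n=b_n^{\rm rad}+b_n^{\rm src}$ and the exponential decay of $b_n^{\rm rad}$ are, by contrast, essentially a repetition of the Hankel‑ratio argument already used for Lemma \ref{TM_lemmatotal}.
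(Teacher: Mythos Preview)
Your approach is correct and is essentially the one the paper invokes: the paper's proof consists only of observing that $w$ admits a sine Fourier expansion $w(r,\phi)=\sum_n\hat w^{(n)}(r)\sin n\phi$ on the annulus $\hat R\le r\le R$ and then deferring to \cite[Lemma~5]{JLLZ-JSC-2017}, whose argument is exactly the mode-by-mode ODE analysis you outline, with the coefficients $w_n(R)$ split into a homogeneous part transported by $H_n^{(2)}(\kappa_0 R)/H_n^{(2)}(\kappa_0\hat R)=O((\hat R/R)^n)$ and a particular part controlled through the radial Green's function. The step you flag as the obstacle does go through: for large $n$ the kernel $G_n(R,s)$ behaves like $n^{-1}(s/R)^n$ on $(\hat R,R)$, so its $L^1$-norm is $O(n^{-2})$, which is precisely the gain you need for $|b_n^{\rm src}|\lesssim n^{-2}\|\xi_n\|_{L^\infty}$.
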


\begin{proof}
Since $w=0$ on $\Gamma_g$, it admits the Fourier series expansion in
terms of the sin functions
\[
w(r, \phi)=\sum\limits_{n=1}^{\infty} \hat{w}^{(n)}(r)\sin n\phi,\quad r\in
[R', R],  \phi\in[0, \pi],
\] 
where $w^{(n)}(r)$ are the Fourier coefficients. Following the same proof
as that in \cite[Lemma 5]{JLLZ-JSC-2017}, we may show the desired result. 
\end{proof}

The following theorem presents the a posteriori error estimate and is the main
result for the TM polarization. By Lemma \eqref{TM_lemmatotal}, the proof is
essentially the same as that for \cite[Theorem 1]{JLLZ-JSC-2017}. The details
are omitted for brevity. 

\begin{theorem}\label{TM_mainthm}
Let $u$ and $u_h$ be the solution of \eqref{TM_v1} and \eqref{TM_v2}, respectively. There exists a positive integer $N_0$ 
independent of $h$ such that for $N>N_0$, the following a posteriori error
estimate holds: 
\[
\|u-u^h\|_{H^1(\Omega)}\lesssim \bigg(\sum\limits_{T\in
M_h}\eta_T^2\bigg)^{1/2} +\bigg[\Big(\frac{\hat{R}}{R}\Big)^N 
+\Big(\frac{e\kappa_0 R}{2N}\Big)^{2N+4}\bigg]\|u^{\rm
ref}\|_{H^1(\Omega)}.
\]
\end{theorem}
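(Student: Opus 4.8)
The plan is to follow the duality argument of \cite[Theorem~1]{JLLZ-JSC-2017}, combining the error representation formulas of Lemma~\ref{Lema_TM_b} with Lemma~\ref{Thm_TM_dual}, Lemma~\ref{Lema_TM_a}, Lemma~\ref{TM_lemmatotal} and the $N^{-2}$ estimate of the lemma immediately preceding this theorem. Write $\xi=u-u^h$, which lies in $H_0^1(\Omega)$ since both $u$ and $u^h$ do, and set $\varepsilon_N:=(\hat R/R)^N+(e\kappa_0R/2N)^{2N+4}$. Let $\Pi_h\colon H_0^1(\Omega)\to V_h$ denote a Scott--Zhang (or Cl\'ement) quasi-interpolation operator, so that on every element $T\in\mathcal M_h$,
\[
\|v-\Pi_h v\|_{L^2(T)}+h_T^{1/2}\|v-\Pi_h v\|_{L^2(\partial T)}\lesssim h_T\|v\|_{H^1(\widetilde T)},
\]
where $\widetilde T$ is the union of the elements sharing a vertex with $T$; by the finite overlap of the patches these bounds sum to $\sum_{T}h_T^{-2}\|v-\Pi_h v\|_{L^2(T)}^2+\sum_{e}h_e^{-1}\|v-\Pi_h v\|_{L^2(e)}^2\lesssim\|v\|_{H^1(\Omega)}^2$ for every $v\in H_0^1(\Omega)$.

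First I would estimate the Galerkin-type residual in the third identity of Lemma~\ref{Lema_TM_b}. Given $\psi\in H_0^1(\Omega)$, take $\psi_h=\Pi_h\psi$, expand $a_{\rm TM}^N(u^h,\psi-\psi_h)$, integrate by parts element by element --- legitimate because $u^h$ is piecewise polynomial and $\psi-\psi_h$ vanishes on $S\cup\Gamma_g$ --- and collect the volume residual $H_{\rm TM}u^h$, the interior jumps $J_e$, and on $\Gamma_R^+$ the boundary residual $J_e$. The Cauchy--Schwarz inequality together with the interpolation bounds above then yields $|\int_{\Gamma_R^+}f\,\overline{(\psi-\psi_h)}\,{\rm d}s-a_{\rm TM}^N(u^h,\psi-\psi_h)|\lesssim(\sum_{T\in\mathcal M_h}\eta_T^2)^{1/2}\|\psi\|_{H^1(\Omega)}$, while Lemma~\ref{TM_lemmatotal} bounds the remaining term $\langle(B_{\rm TM}-B_{\rm TM}^N)u,\psi\rangle_{\Gamma_R^+}$ of that identity by $\varepsilon_N\|u^{\rm ref}\|_{H^1(\Omega)}\|\psi\|_{H^1(\Omega)}$. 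Hence
\[
\bigl|a_{\rm TM}(\xi,\psi)+\langle(B_{\rm TM}-B_{\rm TM}^N)\xi,\psi\rangle_{\Gamma_R^+}\bigr|\lesssim\Bigl(\sum_{T\in\mathcal M_h}\eta_T^2\Bigr)^{1/2}\|\psi\|_{H^1(\Omega)}+\varepsilon_N\|u^{\rm ref}\|_{H^1(\Omega)}\|\psi\|_{H^1(\Omega)}.
\]

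Next I would use the first identity of Lemma~\ref{Lema_TM_b}, which expresses $\|\xi\|_{H^1(\Omega)}^2$ as the real part of $a_{\rm TM}(\xi,\xi)+\langle(B_{\rm TM}-B_{\rm TM}^N)\xi,\xi\rangle_{\Gamma_R^+}$ plus the lower-order terms $\Re\langle B_{\rm TM}^N\xi,\xi\rangle_{\Gamma_R^+}+\Re\int_\Omega(\kappa^2+1)|\xi|^2\,{\rm d}x$. The leading term is controlled by the previous display with $\psi=\xi$; of the lower-order terms, the first is non-positive (since $\Re\bigl(\kappa_0 H_n^{(1)\prime}(\kappa_0R)/H_n^{(1)}(\kappa_0R)\bigr)<0$ for all $n\ge1$, by monotonicity of $J_n^2+Y_n^2$) and the second is $\lesssim\|\xi\|_{L^2(\Omega)}^2$. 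To control $\|\xi\|_{L^2(\Omega)}$ I invoke the dual problem: by the second identity of Lemma~\ref{Lema_TM_b} (equivalently, by taking $v=\xi$ in \eqref{TM_dual}), $\|\xi\|_{L^2(\Omega)}^2=a_{\rm TM}(\xi,w)$; splitting $a_{\rm TM}(\xi,w)=\bigl(a_{\rm TM}(\xi,w)+\langle(B_{\rm TM}-B_{\rm TM}^N)\xi,w\rangle_{\Gamma_R^+}\bigr)-\langle(B_{\rm TM}-B_{\rm TM}^N)\xi,w\rangle_{\Gamma_R^+}$, bounding the bracket by the previous display with $\psi=w$, bounding the last term by the $N^{-2}$ estimate of the preceding lemma, and using $\|w\|_{H^1(\Omega)}\lesssim\|\xi\|_{L^2(\Omega)}$ from Lemma~\ref{Thm_TM_dual}, I obtain $\|\xi\|_{L^2(\Omega)}^2\lesssim\sum_T\eta_T^2+\varepsilon_N^2\|u^{\rm ref}\|_{H^1(\Omega)}^2+N^{-2}\|\xi\|_{H^1(\Omega)}^2$.

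Feeding this back into the first identity gives
\[
\|\xi\|_{H^1(\Omega)}^2\lesssim\Bigl(\sum_{T\in\mathcal M_h}\eta_T^2\Bigr)^{1/2}\|\xi\|_{H^1(\Omega)}+\varepsilon_N\|u^{\rm ref}\|_{H^1(\Omega)}\|\xi\|_{H^1(\Omega)}+\sum_{T\in\mathcal M_h}\eta_T^2+\varepsilon_N^2\|u^{\rm ref}\|_{H^1(\Omega)}^2+N^{-2}\|\xi\|_{H^1(\Omega)}^2.
\]
Choosing $N_0>e\kappa_0R/2$ (so Lemma~\ref{TM_lemmatotal} applies) large enough that the coefficient of $\|\xi\|_{H^1(\Omega)}^2$ on the right is at most $\tfrac12$ for $N>N_0$ --- possible since this coefficient tends to $0$ and $N_0$ depends only on $\kappa_0R$, $\hat R/R$ and the hidden constants, not on $h$ --- I absorb that term and apply Young's inequality to the two cross terms to reach the stated estimate. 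The routine part is the element-wise integration by parts and the interpolation bounds that produce $\eta_T$; the delicate point, where the extra lemmas are needed, is the absorption step: one has to verify that \emph{every} term carrying the full norm $\|\xi\|_{H^1(\Omega)}^2$ --- the DtN truncation acting on $\xi$ itself and the boundary contribution $\Re\langle B_{\rm TM}^N\xi,\xi\rangle_{\Gamma_R^+}$ of the first identity of Lemma~\ref{Lema_TM_b} --- either vanishes or decays in $N$ uniformly in $h$, so that it can be hidden on the left once $N$ is fixed above $N_0$.
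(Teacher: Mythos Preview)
Your proposal is correct and follows essentially the same route the paper points to: the paper omits the details and simply says that, given Lemma~\ref{TM_lemmatotal}, the argument is that of \cite[Theorem~1]{JLLZ-JSC-2017}, and you have reproduced precisely that duality argument --- residual bound via quasi-interpolation and elementwise integration by parts, control of $\|\xi\|_{L^2(\Omega)}$ through the dual problem and the $N^{-2}$ lemma, non-positivity of $\Re\langle B_{\rm TM}^N\xi,\xi\rangle_{\Gamma_R^+}$, and absorption of the $N^{-2}\|\xi\|_{H^1(\Omega)}^2$ term for $N>N_0$. Your identification of the delicate absorption step, and why it forces $N_0$ to depend only on the data and not on $h$, is exactly the point.
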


\subsection{Adaptive FEM algorithm}

It is shown in Theorem \ref{TM_mainthm} that the a posteriori error consists of two parts: the finite element discretization error 
$\varepsilon_h$ and the DtN operator truncation error $\varepsilon_N$, where 
\begin{eqnarray}\label{TM_epsilonN}
\varepsilon_h = \bigg(\sum\limits_{K\in\mathcal M_h}
\eta^2_{T}\bigg)^{1/2},\quad 
\varepsilon_N =\bigg[\Big(\frac{\hat{R}}{R}\Big)^N 
+\Big(\frac{e\kappa_0 R}{2N}\Big)^{2N+4}\bigg]\|u^{\rm
ref}\|_{H^1(\Omega)}. 
\end{eqnarray}
In the implementation, based on \eqref{TM_epsilonN}, the parameters $\hat R, R$,
and $N$ can be chosen appropriately such that the finite element discretization
error is not contaminated by the truncation error, i.e., $\varepsilon_N$ is
required to be small compared with $\varepsilon_h$, for instance,
$\varepsilon_N\leq 10^{-8}$. Table \ref{algo} shows the algorithm of the
adaptive finite element DtN method for solving the open cavity scattering
problem in the TM polarization.  

\begin{table}
\caption{The adaptive finite element DtN method for TM polarization.}
%\vspace{1ex}
\hrule \hrule
\vspace{0.8ex} 
\begin{enumerate}		
\item Given the tolerance $\varepsilon>0$ and the parameter $\tau\in(0,1)$.
\item Fix the computational domain $\Omega$ by choosing $R$.
\item Choose $R'$ and $N$ such that $\epsilon_N\leq 10^{-8}$.
\item Construct an initial triangulation $\mathcal M_h$ over $\Omega$ and compute
error estimators.
\item While $\varepsilon_h>\varepsilon$ do
\item \quad refine mesh $\mathcal M_h$ according to the strategy \\
\[\text{if } \eta_{\hat{T}}>\tau \max\limits_{T\in\mathcal M_h}
\eta_{T}, \text{ refine the element } \hat{T}\in\mathcal M_h, \]
\item \quad denote refined mesh still by $\mathcal M_h$, solve the discrete
problem \eqref{TM_v2} on the new mesh $\mathcal M_h$,
\item \quad compute the corresponding error estimators.
\item End while.
\end{enumerate}
\vspace{0.8ex}
\hrule\hrule
\vspace{0.8ex}
\label{algo}
\end{table}

\section{TE polarization}\label{section:TE} 

In this section, we consider the TE polarization. Since the discussions are
similar to the TM polarization, we briefly present the parallel results without
providing the details. 

In TE polarization, the total field $u$ satisfies the boundary value problem of
the generalized Helmholtz equation
\begin{equation}\label{TE_total}
\begin{cases}
\nabla\cdot(\kappa^{-2}\nabla u)+u=0  \quad
&{\rm in} ~ \Omega,\\
\partial_{\nu} u=0 \quad & {\rm on} ~ S\cup \Gamma_g.
\end{cases}
\end{equation}

Consider the same plane incident wave $u^{\rm i}(x_1,x_2)=e^{{\rm i}(\alpha
x_1-\beta x_2)}$. Due to the homogeneous Neumann boundary condition on
$\Gamma_g$, the reflected field is
\begin{equation*}\label{TE_reflection}
u^{\rm r}(x_1, x_2)=e^{{\rm i}(\alpha x_1+\beta x_2)}.
\end{equation*}
By the Jacobi--Anger identity, the reference wave $u^{\rm ref}=u^{\rm
i}+u^{\rm r}$ admits the following expansion:
\begin{eqnarray}
u^{\rm ref}(x_1, x_2) =2J_0(\kappa_0 r)+4\sum\limits_{n=1}^{\infty} {\rm i}^n
J_n(\kappa_0 r) \cos n(\theta-\pi/2) \cos n\phi.\label{TE_ref}
\end{eqnarray}

Once again, the total field $u$ is assumed to be composed of the reference field
$u^{\rm ref}$ and the scattered field $u^{\rm s}$, i.e., 
\[
u=u^{\rm ref}+u^{\rm s},
\]
where the scattered field $u^{\rm s}$ is also required to satisfy the Sommerfeld
radiation condition
\[
 \lim_{r=|x|\to\infty}r^{1/2}(\partial_{r} u^{\rm s}-{\rm i}\kappa_0
u^{\rm s})=0.
\]

Let $u\in L_{\rm TE}^2(\Gamma_R^{+})=\left\{u\in L^2(\Gamma_R^{+}): 
\partial_{\phi}u(R,0)=\partial_{\phi}u(R, \pi)=0\right\}$. For any $u\in
L_{\rm TE}^2$, it has the Fourier series expansion
\[
u(R,\phi)=\sum\limits_{n=0}^{\infty} a_n \cos n\phi,
\]
where 
\[
a_0=\frac{1}{\pi}\int_{0}^{\pi} u(R,\phi)\,{\rm d}\phi,
\quad a_n=\frac{2}{\pi}\int_{0}^{\pi} u(R,\phi)\cos n\phi\,{\rm d}\phi.
\]
Define the trace function space $H_{\rm TE}^{s}(\Gamma_R^{+})=\left\{u\in
L_{\rm TE}^2 (\Gamma_R^+): 
\|u\|_{H_{\rm TE}^s(\Gamma_R^+)}\leq \infty\right\},$ where the $H_{\rm
TE}^s(\Gamma_R^+)$ norm is given by 
\[
\|u\|^2_{H^s_{\rm TE}(\Gamma_R^+)}=\left(\sum\limits_{n=0}^{\infty}(1+n^2)^s
|a_n|^2\right)^{1/2}.
\]

Following \cite{Li-JCM-2018}, we introduce the DtN operator
\begin{equation}\label{TE_DtN}
B_{\rm TE}u(R,\phi)=\kappa_0\sum\limits_{n=0}^{\infty}
\frac{H_n^{(1)'}(\kappa_0 R)}{H_n^{(1)}(\kappa_0 R )}a_n\cos n\phi.
\end{equation}
It is shown in \cite{Li-JCM-2018} that $B_{\rm TE}:
H^{1/2}_{\rm TE}(\Gamma_R^+)\to H_{TE}^{-1/2}(\Gamma_R^+)$ is
continuous. The following TBC can be imposed for the TE polarized wave field:
\[
\partial_{\rho}u=B_{\rm TE} u+g\quad {\rm on} ~ \Gamma_R^+,
\]
where $g=\partial_{\rho} u^{\rm ref}-B_{\rm TE}u^{\rm ref}$. Substituting
\eqref{TE_ref} into \eqref{TE_DtN} and using the Wronskian identity, we get
explicitly 
\begin{eqnarray*}
 g= -\frac{4\,{\rm i}}{\pi R}\frac{1}{H_0^{(1)}(\kappa_0 R)}-\frac{8}{\pi R} 
 \sum\limits_{n=1}^{\infty}\frac{{\rm i}^{n+1}}{H_n^{(1)}(\kappa_0 R)}\cos n(\theta-\pi/2)\cos n\phi.
\end{eqnarray*}

Therefore, the open cavity scattering problem in TE polarization can be reduced
equivalently into the boundary value problem 
\begin{equation*}
\begin{cases}
\nabla\cdot\left(\kappa^{-2}\nabla u\right) +u=0\quad &
{\rm in} ~ \Omega,\\
\partial_{\nu}u=0 & \quad {\rm on} ~ S\cup \Gamma_g,\\
\partial_{\rho} u-B_{\rm TE} u=g\quad & {\rm on} ~ \Gamma_R^+.
\end{cases}
\end{equation*}
The corresponding variational formulation is to find $u\in H^1(\Omega)$ such
that
\begin{equation}\label{TE_v1}
a_{\rm TE} (u, v)=\kappa_0^{-2}\int_{\Gamma_R^+}
g\bar{v}\,{\rm d}s\quad \forall\, v\in H^1(\Omega), 
\end{equation}
where the sesquilinear form $a_{\rm TE}: H^1(\Omega)\times
H^1(\Omega)\to\mathbb C$ is defined as
\[
a_{\rm TE} (u, v)=\int_{\Omega} \kappa^{-2}\nabla u\cdot
\nabla\bar{v}\,{\rm d}x-\int_{\Omega}
u\bar{v}\,{\rm d}x-\kappa_0^{-2}\int_{\Gamma_R^+}B_{\rm
TE} u\,\bar{v}\,{\rm d}s.
\]

\begin{theorem}
The variational problem \eqref{TE_v1} has a unique solution $u\in
H^1(\Omega)$, which satisfies the estimate
\[
\|u\|_{H^1(\Omega)}\lesssim \|g\|_{H^{-1/2}(\Gamma^{+}_R)}.
\]
\end{theorem}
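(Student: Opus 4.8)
The plan is to establish well-posedness of \eqref{TE_v1} by the standard
Fredholm-alternative argument, mirroring the TM case from \cite{Li-JCM-2018}.
First I would verify the key sign property of the DtN operator, namely that
$-\Re\langle B_{\rm TE} u, u\rangle_{\Gamma_R^+}\ge 0$ for all
$u\in H^{1/2}_{\rm TE}(\Gamma_R^+)$. This follows from the asymptotic/monotonicity
behaviour of the logarithmic derivative $H_n^{(1)\prime}(\kappa_0 R)/H_n^{(1)}(\kappa_0 R)$:
writing the quadratic form modewise, each coefficient contributes
$\kappa_0\,\Re\bigl(H_n^{(1)\prime}(\kappa_0 R)/H_n^{(1)}(\kappa_0 R)\bigr)|a_n|^2$,
and the real part of this logarithmic derivative is non-positive (indeed, using the
Wronskian identity one gets $\Re\bigl(H_n^{(1)\prime}/H_n^{(1)}\bigr)=
2/(\pi\kappa_0 R\,|H_n^{(1)}(\kappa_0 R)|^2)\cdot(\text{something})$; more simply the
classical bound $\Im\langle B_{\rm TE}u,u\rangle\ge 0$ and the corresponding real-part
estimate from \cite{A-JCP-2006, Li-JCM-2018}). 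Once this is in hand, the sesquilinear
form $a_{\rm TE}$ satisfies a G\aa rding inequality: adding $2\int_\Omega u\bar v$
turns $a_{\rm TE}(u,u)$ into a coercive form on $H^1(\Omega)$ since
$\kappa^{-2}$ is bounded below (because $\Im\kappa\ge 0$ and $\kappa$ equals
$\kappa_0$ outside $B_{\hat R}^+$, with $|\kappa|$ bounded away from zero and infinity
on the inhomogeneous region by assumption on the medium).

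Next I would invoke the compact embedding $H^1(\Omega)\hookrightarrow\hookrightarrow
L^2(\Omega)$ together with the fact that $B_{\rm TE}:H^{1/2}_{\rm TE}(\Gamma_R^+)\to
H^{-1/2}_{\rm TE}(\Gamma_R^+)$ is continuous (stated in the excerpt) and that the trace
map $H^1(\Omega)\to H^{1/2}(\Gamma_R^+)$ is compact into $L^2(\Gamma_R^+)$, so the
zeroth-order perturbation is compact relative to the principal part. The
Fredholm alternative then reduces existence to uniqueness. For uniqueness, suppose
$g=0$ and $u$ solves the homogeneous problem; extend $u$ to the exterior
$\mathbb{R}^2_+\setminus B_R^+$ by the DtN-generated outgoing solution, obtaining a
global solution of the generalized Helmholtz equation satisfying the PEC condition on
$\Gamma_g$ and the Sommerfeld radiation condition. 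Taking the imaginary part of the
identity $a_{\rm TE}(u,u)=0$ (or equivalently applying Rellich's lemma / the unique
continuation principle on the exterior half-disc where $\kappa=\kappa_0$ is constant)
forces $u\equiv 0$ in the exterior, hence $u$ and $\partial_\rho u$ vanish on
$\Gamma_R^+$, and unique continuation propagates $u\equiv 0$ into $\Omega$. This step
uses that $\kappa^{-2}$ is, say, piecewise smooth so that unique continuation across
the material interfaces holds.

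Finally, the stability estimate $\|u\|_{H^1(\Omega)}\lesssim
\|g\|_{H^{-1/2}(\Gamma_R^+)}$ follows from the open mapping theorem applied to the now
invertible operator, or more quantitatively by a standard contradiction argument: if
no such constant existed there would be a sequence $u_j$ with
$\|u_j\|_{H^1(\Omega)}=1$ and $\|g_j\|_{H^{-1/2}(\Gamma_R^+)}\to0$; extracting a
weakly convergent subsequence and using the compactness of the lower-order terms, the
limit would solve the homogeneous problem, hence vanish, contradicting
$\|u_j\|_{H^1}=1$ after passing to the strong limit via the G\aa rding inequality. I
expect the main obstacle to be the uniqueness step: one must carefully handle the
radiation condition and the reduction to an exterior problem on $\Gamma_R^+$, and
invoke unique continuation for the \emph{generalized} Helmholtz operator
$\nabla\cdot(\kappa^{-2}\nabla\,\cdot\,)+1$ across the (possibly nonsmooth) interfaces
where $\kappa$ jumps, which requires the medium to be regular enough (piecewise
Lipschitz) for the unique continuation principle to apply. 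Since all of this is
carried out in detail in \cite{Li-JCM-2018}, I would cite that reference for the full
argument and only sketch the structure here.
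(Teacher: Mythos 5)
Your proposal is correct and matches the approach the paper relies on: the paper itself gives no proof of this theorem, stating it as the TE analogue of the TM result and deferring (as for Theorem 3.1) to the well-posedness analysis in \cite{Li-JCM-2018}, which is exactly the G\aa rding-inequality/Fredholm-alternative argument with uniqueness via the sign properties of the DtN operator that you sketch. One small correction: the Wronskian identity yields $\Im\bigl(H_n^{(1)\prime}(\kappa_0R)/H_n^{(1)}(\kappa_0R)\bigr)=2/\bigl(\pi\kappa_0R|H_n^{(1)}(\kappa_0R)|^2\bigr)>0$, i.e.\ it controls the imaginary part (used for uniqueness), while the non-positivity of the real part (used for the G\aa rding inequality) is a separate classical fact, as you correctly fall back on.
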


\subsection{Finite element approximation}

Denote by $\mathcal M_h$ a regular triangulation of $\Omega$. Let
$V_h\subset H^1(\Omega)$ be a conforming finite 
element space, i.e., 
\[
V_h:=\left\{v_h\in C(\overline{\Omega}): v_h|_T\in P_m(K)\quad
\forall\, T\in\mathcal M_h\right\}. 
\]
The DtN operator \eqref{TE_DtN} needs to be truncated into a sum
of finitely many terms
\[
B_{\rm TE}^{N} u=\kappa_0\sum\limits_{n=0}^{N} 
\frac{H_n^{(1)'}(\kappa_0 R)}{H_n^{(1)}(\kappa_0 R)} a_n \cos n\phi.
\] 
The finite element approximation of variational problem \eqref{TE_v1} reads as
follows: find $u_h\in V_h$ such that 
\begin{equation}\label{TE_v2}
a_{\rm TE}^N (u^h, v^h)=\kappa_0^{-2}\int_{\Gamma_R^+}
g\,\overline{v^h}\, {\rm d}s\quad \forall\, v^h\in V_h,
\end{equation}
where the sesquilinear form $a_{\rm TE}^N: V_h\times V_h\to\mathbb C$ is
defined by
\[
a_{\rm TE}^N (u^h, v^h)=
\int_{\Omega} \kappa^{-2}\nabla u^h\cdot \nabla \overline{v^h}\,{\rm
d}x-\int_{\Omega}u^h\,\overline{v^h}\,{\rm d}x
-\kappa_0^{-2}\int_{\Gamma_R^+}B_{\rm TE}^{N} u^h
\,\overline{v^h}\,{\rm d}s.
\]

Similarly, we may assume that the variational problem \eqref{TE_v2} has a
unique solution $u_h\in V_h$ when $h$ is sufficiently small and $N$ is
sufficiently large.

\subsection{A posteriori error analysis}

Denote the jump residual of the interior edges as
\[
J_e =-\left(\kappa^{-2}\nabla u^h|_{T_1}\cdot
\nu_1+\kappa^{-2}\nabla u^h|_{T_2}\cdot
\nu_2\right).
\]
For any boundary edge $e\subset \Gamma_R^+$, the jump residual is defined as
\[
J_e=2\kappa_0^{-2}(B_{\rm TE}^N u^h-\nabla u^h\cdot
\nu-g).
\]
For any triangle $T\in\mathcal M_h$, denote by $\eta_T$ the local error
estimator:
\[
\eta_T=h_T \|H_{\rm TE}
u^h\|_{L^2(T)}+\left(\frac{1}{2}\sum\limits_{e\in\partial T}h_e
\|J_e\|_{L^2(e)}^2\right)^{1/2}
\]
where $H_{\rm TE}$ is the generalized Helmholtz operator given by $H_{\rm TE}
u=\nabla\cdot(\kappa^{-2}\nabla u)+u$.

The following theorem is the main result for the TE polarization. It presents
the a posteriori error estimate between the solution of the open cavity
scattering problem and the finite element solution. 

\begin{theorem}
Let $u$ and $u_h$ be the solution of \eqref{TE_v1} and \eqref{TE_v2},
respectively. There exists a positive integer $N_0$ independent 
of $h$ such that for $N>N_0$, the following a posteriori error estimate holds:
\[
\|u-u^h\|_{H^1(\Omega)}\lesssim
\bigg(\sum\limits_{T\in M_h}\eta_T^2\bigg)^{1/2}
+\bigg[\Big(\frac{\hat{R}}{R}\Big)^N 
+\Big(\frac{e\kappa_0 R}{2N}\Big)^{2N+4}\bigg]\|u^{\rm
ref}\|_{H^1(\Omega)}.
\]
\end{theorem}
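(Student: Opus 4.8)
The plan is to repeat, mutatis mutandis, the a~posteriori analysis carried out for the TM polarization. Set $\xi=u-u^h$ with $u,u^h$ the solutions of \eqref{TE_v1} and \eqref{TE_v2}, and introduce the TE dual problem: find $w\in H^1(\Omega)$ such that $a_{\rm TE}(v,w)=\int_\Omega v\bar\xi\,{\rm d}x$ for all $v\in H^1(\Omega)$; equivalently $\nabla\cdot(\kappa^{-2}\nabla w)+w=-\xi$ in $\Omega$, $\partial_\nu w=0$ on $S\cup\Gamma_g$, and $\partial_\rho w=B_{\rm TE}^{*}w$ on $\Gamma_R^+$, where $B_{\rm TE}^{*}$ is the adjoint DtN operator (the TE analogue of $B_{\rm TM}^{*}$). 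First I would record the three auxiliary facts that mirror Lemmas~\ref{Thm_TM_dual}--\ref{Lema_TM_b}: (i) well-posedness of the dual problem with $\|w\|_{H^1(\Omega)}\lesssim\|\xi\|_{L^2(\Omega)}$, which follows from a Fredholm argument built on the G\aa rding inequality for $a_{\rm TE}$ together with the dissipativity $\Re\langle B_{\rm TE}^{*}w,w\rangle_{\Gamma_R^+}\le 0$ of the DtN operator; (ii) the trace bounds $\|v\|_{H^{1/2}(\Gamma_R^+)}+\|v\|_{H^{1/2}(\Gamma_{\hat R}^+)}\lesssim\|v\|_{H^1(\Omega)}$ for $v\in H^1(\Omega)$; and (iii) the error-representation identities expressing $\|\xi\|_{H^1(\Omega)}^2$ and $\|\xi\|_{L^2(\Omega)}^2$ through $a_{\rm TE}(\xi,\cdot)$, the residual of the discrete equation \eqref{TE_v2}, the DtN truncation pairings $\langle(B_{\rm TE}-B_{\rm TE}^N)\xi,\cdot\rangle_{\Gamma_R^+}$ and $\langle(B_{\rm TE}-B_{\rm TE}^N)u,\cdot\rangle_{\Gamma_R^+}$, and the local quantities that, after element-wise integration by parts, are bounded by the estimators $\eta_T$.

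The one genuinely new ingredient is the TE analogue of Lemma~\ref{TM_lemmatotal}: for sufficiently large $N$ and any $\psi\in H^1(\Omega)$,
\[
\big|\langle(B_{\rm TE}-B_{\rm TE}^N)u,\psi\rangle_{\Gamma_R^+}\big|\lesssim\Big[(\hat R/R)^N+(e\kappa_0 R/2N)^{2N+4}\Big]\|u^{\rm ref}\|_{H^1(\Omega)}\|\psi\|_{H^1(\Omega)}.
\]
I would split $u=u^{\rm s}+u^{\rm ref}$. For the scattered part, since $u^{\rm s}$ extends outward as a convergent series of outgoing cylindrical waves, the $n$-th Fourier mode on $\Gamma_R^+$ is damped by at least $(\hat R/R)^n$ relative to its value on $\Gamma_{\hat R}^+$, which together with $|H_n^{(1)'}/H_n^{(1)}|\lesssim n$ and the trace bounds yields the $(\hat R/R)^N$ term exactly as in \cite[Lemma~4]{JLLZ-JSC-2017}. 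For the reference part, \eqref{TE_ref} gives $(B_{\rm TE}-B_{\rm TE}^N)u^{\rm ref}=\kappa_0\sum_{n=N+1}^\infty\frac{H_n^{(1)'}(\kappa_0 R)}{H_n^{(1)}(\kappa_0 R)}\big[4{\rm i}^n J_n(\kappa_0 R)\cos n(\theta-\pi/2)\big]\cos n\phi$ (the $n=0$ and the first $N$ modes being retained), and then the asymptotics $J_n(z)\sim(2\pi n)^{-1/2}(ez/2n)^n$, $|H_n^{(1)'}/H_n^{(1)}|\lesssim n$, Cauchy--Schwarz against the $H^{1/2}(\Gamma_R^+)$-norm of $\psi$, and the elementary tail bound $\sum_{n>N}(e\kappa_0 R/2n)^{2n}\le(e\kappa_0 R/2N)^{2N+4}/(1-(e\kappa_0 R/2N)^2)$, valid once $N>e\kappa_0 R/2$, produce the second term. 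Finally $\|u^{\rm s}\|_{H^1(\Omega)}\le\|u\|_{H^1(\Omega)}+\|u^{\rm ref}\|_{H^1(\Omega)}$ and $\|u\|_{H^1(\Omega)}\lesssim\|g\|_{H^{-1/2}(\Gamma_R^+)}\lesssim\|\partial_\rho u^{\rm ref}\|_{H^{-1/2}(\Gamma_R^+)}+\|u^{\rm ref}\|_{H^{1/2}(\Gamma_R^+)}\lesssim\|u^{\rm ref}\|_{H^1(\Omega)}$, where the last step uses $J_n'(z)/J_n(z)\sim n/z$ to bound the normal trace of $u^{\rm ref}$ by its $H^{1/2}$ trace, the constant $n=0$ mode only adding a harmless bounded contribution. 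One also needs $|\langle(B_{\rm TE}-B_{\rm TE}^N)\xi,w\rangle_{\Gamma_R^+}|\lesssim N^{-2}\|\xi\|_{H^1(\Omega)}^2$, obtained by the cosine-series version of \cite[Lemma~5]{JLLZ-JSC-2017}.

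With these in hand, I would conclude exactly as for Theorem~\ref{TM_mainthm}: insert the representation identities, bound every interior and boundary residual term by $\big(\sum_{T}\eta_T^2\big)^{1/2}\|\psi\|_{H^1(\Omega)}$, use the dual estimate to control $\|\xi\|_{L^2(\Omega)}$ by $\|\xi\|_{H^1(\Omega)}$ times the small factor $N^{-2}$ plus estimator terms and absorb it for $N>N_0$, and collect the two DtN contributions into $[(\hat R/R)^N+(e\kappa_0 R/2N)^{2N+4}]\|u^{\rm ref}\|_{H^1(\Omega)}$. The main obstacle is the reference-field part of the truncation lemma: because the TE DtN map lives on the semicircle $\Gamma_R^+$, it does not terminate after finitely many modes when applied to $u^{\rm ref}$ (in contrast to the grating case), so one must carry the full Bessel/Hankel asymptotic analysis and, at the same time, keep the inhomogeneous and possibly discontinuous coefficient $\kappa^{-2}$ of the generalized Helmholtz operator under control throughout the G\aa rding/Fredholm and trace arguments.
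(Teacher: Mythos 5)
Your proposal is correct and follows exactly the route the paper intends: the paper gives no separate proof for the TE theorem, stating only that the discussion parallels the TM polarization, and your outline (TE dual problem, error-representation identities, trace bounds, the cosine-series analogue of Lemma \ref{TM_lemmatotal} with the split $u=u^{\rm s}+u^{\rm ref}$ and the same Bessel/Hankel asymptotics, plus the $N^{-2}$ bound for the dual pairing) is precisely that parallel argument, with the extra $n=0$ mode correctly noted as harmless.
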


The algorithm of the finite element DtN method for the TE polarization is the
same as that for the TM polarization. 

\section{TBC matrices}\label{section:correction}

The stiffness matrix $A$ arising from the discrete problem  \eqref{TM_v2} or
\eqref{TE_v2} can be written as
\begin{equation*}
A=B-F,
\end{equation*}
where the matrix $B$ comes from 
\[
\int_{\Omega}(\nabla u\cdot\nabla
\bar{v}-\kappa^2 u \bar{v}){\rm d}x
\]
for the TM polarization or 
\[
\int_{\Omega}(\kappa^{-2}\nabla u\cdot\nabla
\bar{v}-u \bar{v}){\rm d}x
\]
for the TE polarization, while the matrix $F$ accounts for the TBC part.
Since the matrix $B$ can be constructed in a standard way of the finite
element method, we focus on building the TBC matrix $F$ in this section.

\subsection{TM polarization}

Let $\mathcal M_h$ be a regular triangulation of the computational domain
$\Omega$. Correspondingly, the upper semi-circle $\Gamma_R^+$ is approximated by
line segments denoted by $\widehat{x_1\cdots x_M}$, where $x_i=
R (\cos \phi_i, \sin \phi_i), \phi_i<\phi_{i+1}, i=1, \dots, M-1$.
Denote the distance between each pair of adjacent points by 
$l_i=|x_i-x_{i-1}|$. 

On the boundary $\widehat{x_1\cdots x_M}$, let the finite element solution be
given by 
\[
u(x)=\sum\limits_{i=1}^{M} u(x_i) L_i(x),
\]
where $L_i$ are the basis functions defined as
\begin{equation*}
L_i(x)=\left\{
\begin{aligned}
&\frac{(x_{i+1}-x)}{l_{i+1}}\cdot\frac{(x_{i+1}-x_i)}{
l_{i+1}} ,\quad &
x\in\widehat{x_i x_{i+1}}, \\
&\frac{(x-x_{i-1})}{l_{i}}\cdot\frac{(x_i-x_{i-1})}
{l_{i}},\quad &
x\in\widehat{x_{i-1} x_{i}}.	
\end{aligned}
\right.	
\end{equation*}
Since the DtN operator is defined on the upper semi-circle $\Gamma_R^+$, there
are no points to the right of $x_1$ or to the left of 
$x_M$. For the convenience of discussion, we extend the basis functions by zero
to the line segments $\widehat{x_0 x_1}$ and $\widehat{x_M x_{M+1}}$.

By a straight forward computation, the Fourier coefficients of $u$ can be
obtained as
\begin{eqnarray*}
u^{(n)}(R)=\frac{2}{\pi R}\sum\limits_{i=1}^{M} u(R, \phi_i)\bigg[
\Big(\frac{1}{6}l_i\sin(n\phi_{i-1})+\frac{1}{3}l_i\sin(n\phi_i)\Big)	\\
+\Big(\frac{1}{3}l_{i+1}\sin(n\phi_{i})+\frac{1}{6}l_{i+1}
\sin(n\phi_{i+1})\Big)\bigg].	
\end{eqnarray*}
Substituting the above equation into the truncated DtN operator \eqref{TM_tDtN}
yields 
\begin{eqnarray*}
B^N_{\rm TM} u (R, \phi) =\sum\limits_{n=1}^{N}
\alpha^{(n)}\sum_{i=1}^{M} \beta_i^{(n)} u(R, \phi_i)\sin(n\phi),	
\end{eqnarray*}	
where
\begin{eqnarray*}
&&\alpha^{(n)}=\frac{2}{\pi R}\kappa_0\frac{H_n^{(1)'}(\kappa_0
R)}{H_n^{(1)}(\kappa_0 R)},\\
&&\beta_i^{(n)}=\Big(\frac{1}{6}l_i\sin(n\phi_{i-1})+\frac{1}{3}
l_i\sin(n\phi_i)\Big)+\Big(\frac{1}{3}l_{i+1}\sin(n\phi_{i})+\frac{1}{6}
l_{i+1}\sin(n\phi_{i+1})\Big).
\end{eqnarray*}
Noting that the basis functions $L_i$ are real-valued , we can compute the
TBC matrix as follows 
\begin{eqnarray*}
\int_{\Gamma_R^+} (B^N_{\rm TM} u)L_j {\rm d}s &=&
\sum_{i=1}^{M}\left[ \sum\limits_{n=1}^{N}\alpha^{(n)}\beta_i^{(n)}
\int_{\Gamma_R^{+}} \sin(n\phi)L_j(x)\,{\rm d}s\right] u(R,
\phi_i)\approx \sum_{i=1}^{M} F_{ji} u(R, \phi_i),
\end{eqnarray*}
where 
\begin{eqnarray*}
F_{ji}= \sum\limits_{n=1}^{N}\alpha^{(n)}\beta_i^{(n)}\gamma_j^{(n)}
\end{eqnarray*}
and
\[
\gamma_j^{(n)}=\Big(\frac{1}{6}l_j\sin(n\phi_{j-1})+\frac{1}{3}
l_j\sin(n\phi_j)\Big)+\Big(\frac{1}{3}l_{j+1}\sin(n\phi_{j})+\frac{1}{6}
l_{j+1}\sin(n\phi_{j+1})\Big).
\]

\subsection{TE polarization}

In this section, we consider how to construct the matrix $B$ for the TE
polarization. Using the same basis functions as those for the TM polarization, 
we can expand the finite element solution $u$ on the boundary as
\[
u(x)=\sum\limits_{i=1}^{M} u(x_i) L_i(x).
\]
It follows from the straight forward calculation that the Fourier coefficients
of $u$ are 
\begin{eqnarray*}
u^{(0)}(R)& = &\frac{1}{\pi R} \sum\limits_{i=1}^{N} u(R,
\theta_i)\Big(\frac{1}{2}l_i+\frac{1}{2}l_{i+1}\Big),\\
u^{(n)}(R)&=&\frac{2}{\pi R}\sum\limits_{i=1}^{N} u(R, \phi_i)\bigg[
\Big(\frac{1}{6} l_i\cos(n\phi_{i-1})+\frac{1}{3}l_i\cos(n\phi_i)\Big)\\
&&+\Big(\frac{1}{3}l_{i+1}\cos(n\phi_{i})+\frac{1}{6}l_{i+1}\cos(n\phi_{i+1}
)\Big)\bigg],\quad n\geq 1. 
\end{eqnarray*}
Substituting the Fourier coefficients into the truncated DtN operator $B^N_{\rm
TE}$, we get
\[
B^N_{\rm TE} u=\sum\limits_{n=0}^{N} \alpha^{(n)}\sum\limits_{i=1}^{M}
\beta_i^{(n)} u(R, \phi_i) \cos(n\phi),
\]
where
\begin{equation*}
\alpha^{(0)} =\frac{\kappa_0}{\pi R}\frac{H_0^{(1)'}(\kappa_0
R)}{H_0^{(1)}(\kappa_0 R)},\quad
		 \alpha^{(n)} =\frac{2\kappa_0}{\pi R}\frac{H_n^{(1)'}(\kappa_0
R)}{H_n^{(1)}(\kappa_0 R)}, \quad n\geq 1, 
\end{equation*}
and
\begin{align*}
\beta_i^{(0)}& =\frac{1}{2} l_i+\frac{1}{2} l_{i+1}, \\
\beta_i^{(n)} & =\Big(\frac{1}{6} l_i
\cos(n\phi_{i-1})+\frac{1}{3}l_i\cos(n\phi_i)\Big)
+\Big(\frac{1}{3} l_{i+1}
\cos(n\phi_{i})+\frac{1}{6}l_{i+1}\cos(n\phi_{i+1})\Big),\quad n\geq 1.
\end{align*}
Then the TBC matrix can be obtained as follows
\[
F_{ji}=\sum\limits_{n=0}^{N}\alpha^{(n)}\beta_i^{(n)}\int_{
\Gamma_R^+} \cos(n\phi)L_j\,{\rm
d}s\approx\sum\limits_{n=0}^{N}\alpha^{(n)}\beta_i^{(n)}\gamma_j^{(n)},
\]
where
\begin{align*}
\gamma_j^{(0)}&=\frac{1}{2} l_j+\frac{1}{2} l_{j+1}, \\
 \gamma_j^{(n)}&=\Big(\frac{1}{6} l_j
\cos(n\phi_{j-1})+\frac{1}{3}l_j\cos(n\phi_j)\Big)
+\Big(\frac{1}{3} l_{j+1}
\cos(n\phi_{j})+\frac{1}{6}l_{j+1}\cos(n\phi_{j+1})\Big),\quad n\geq 1.
\end{align*}

\section{Radar cross section}\label{section:rcs}

The physical parameter of interest is the radar cross section (RCS), which
measures the detectability of a target by a radar system. In two dimensions,
the RCS is defined by
\[
\sigma(\varphi):=\lim\limits_{r\rightarrow\infty}2\pi r\frac{|u^{\rm s}(r,
\varphi)|^2}{|u^{\rm i}|^2},
\]
where $u^{\rm i}$ and $u^{\rm s}$ are the incident and scattered fields,
respectively, $\varphi$ is the observation angle. Since the incident wave is
chosen to be a plane wave, the
RCS reduces to 
\[
\sigma(\varphi)=\lim\limits_{r\rightarrow\infty}2\pi r |u^{\rm s}(r,
\varphi)|^2.
\]
When the incident angle $\theta$ and the observation angle $\varphi$ are the
same, $\sigma$ is called the backscatter RCS, which is defined by
\[
 \text{Backscatter RCS}(\sigma)(\varphi)=10\log\sigma(\varphi) {\rm dB}. 
\]
In this section, we derive the formulas of the backscatter RCS when the
scattered field is measured on the aperture $\Gamma$ and on the upper
semi-circle $\Gamma_R^+$, respectively.

\subsection{TM polarization with measurement on $\Gamma$}

Let $x=(x_1, x_2)$ and $y=(y_1, y_2)$ be the observation point and the source
point, respectively. The fundamental solution for the two dimensional Helmholtz
equation is defined by
\[
\Phi(x, y)=\frac{\rm i}{4}H_0^{(1)}(\kappa
|x-y|),
\]
where $H_0^{(1)}$ is the Hankel function of the first kind with order $0$. 

In the TM polarization, the half space Green's function is
\[
G_{\rm TM}(x, y)=\Phi(x, y)-\Phi(\tilde{x}, y),
\]
where $\tilde{x}=(x_1, -x_2)$ is the reflection point of $x$ with respect to
the $x_1$-axis. Let  $x=r(\cos\varphi, \sin\varphi)$. We have from the
Helmholtz equation and the Green theorem of the second kind that 
\begin{eqnarray*}
-u^{s}(x) &=& \int_{\Omega}\left(\Delta G_{\rm TM}(x, y) +\kappa^2
G_{\rm TM}(x, y)\right)u^{\rm s}(y){\rm d}y\\
&=& \frac{e^{{\rm i}\,\kappa r}}{\sqrt{r}}\frac{e^{{\rm
i}\,\frac{\pi}{4}}}{\sqrt{8\pi k}}
\int_{\Gamma} u^s(y_1, 0) 2{\rm i}\kappa\sin\varphi e^{{\rm
i}\kappa\cos\varphi y_1}{\rm d}y_1+o(r^{-1/2}),
\end{eqnarray*}
where $\Gamma$ is the aperture of the cavity. Substituting the above equation
into the definition of $\sigma$, we have 
\begin{eqnarray}\label{RCS_TM}
\sigma(\varphi) =\kappa \left| \sin\varphi\int_{\Gamma} u^{\rm s}(y_1, 0)
e^{{\rm i}\kappa \cos\varphi y_1}{\rm d}y_1\right|^2.
\end{eqnarray}
	
\subsection{TE polarization  with measurement on $\Gamma$}

In the TE polarization, the half space Green's function is
\[
G_{\rm TE}(x, y)=\Phi(x, y)+\Phi(\tilde x, y).
\]
By Green's theorem, we may similarly obtain 
\begin{eqnarray*}
-u^{s}(x) = 2\frac{e^{{\rm i}\kappa r}}{\sqrt{r}}\frac{e^{{\rm
i}\frac{\pi}{4}}}{\sqrt{8\pi k}}
\int_{\Gamma} \frac{\partial}{\partial y_2}u^{\rm s}(y_1, 0) e^{{\rm
i}\kappa \cos\varphi y_1}{\rm d}y_1+o(r^{-1/2}), 
\end{eqnarray*}
which gives
\begin{eqnarray}\label{RCS_TE}
\sigma(\varphi)= \frac{1}{\kappa}\left| \int_{\Gamma} \frac{\partial}{\partial
y_2}u^{\rm s}(y_1, 0) e^{{\rm i}\kappa\cos\varphi y_1}{\rm d}y_1\right|^2.
\end{eqnarray}

\subsection{TM polarization with measurement on $\Gamma_R^+$}

Using the half space Green's function, we may obtain 
\begin{eqnarray*}
u^{\rm s}(x) &=& -\int_{\Gamma_R^+} G_{\rm TM}(x, y) \frac{\partial}{\partial
\nu}u^{\rm s}(y){\rm d}s_y+\int_{\Gamma_R^+} \frac{\partial}{\partial
\nu(y)}G_{\rm TM}(x, y)u^{\rm s}(y){\rm d}s_y\\
&=& \frac{e^{{\rm i}\kappa r}}{\sqrt{r}}\frac{e^{{\rm
i}\frac{\pi}{4}}}{\sqrt{8\pi\kappa}}R\bigg\{
\int_{0}^{\pi}\big[{\rm i}\kappa \cos(\phi-\varphi) u^{\rm s}(R,
\phi)-\frac{\partial}{\partial r}u^{\rm s}(R, \phi)\big]
e^{{\rm i}\kappa R\cos\left(\phi-\varphi\right)}{\rm
d}\phi\\
&&\quad -\int_{0}^{\pi}\Big[{\rm i}\kappa \cos(\phi+\varphi)
u^{\rm s}(R, \phi)-\frac{\partial}{\partial r}u^{\rm s}(R, \phi)\Big]
e^{{\rm i}\kappa R\cos\left(\phi+\varphi\right)}{\rm d}\phi
\bigg\}+o(r^{-1/2}).
\end{eqnarray*}
Substituting the above equation into $\sigma$, we get
\begin{eqnarray}\label{RCS_TMf}
\sigma(\varphi) =\frac{R^2}{4\kappa}\Bigg|
\int_{0}^{\pi}\Big[{\rm i}\kappa \cos(\phi-\varphi) u^{\rm s}(R,
\phi)-\frac{\partial}{\partial r}u^{\rm s}(R, \phi)\Big]
e^{{\rm i}\kappa R\cos\left(\phi-\varphi\right)}{\rm d}\phi
\notag\\
 -\int_{0}^{\pi}\Big[{\rm i}\kappa \cos(\phi+\varphi) u^{\rm s}(R,
\phi)-\frac{\partial}{\partial r}u^{\rm s}(R, \phi)\Big]
e^{{\rm i}\kappa R\cos\left(\phi+\varphi\right)}{\rm d}\phi
\Bigg|^2.
\end{eqnarray}

\subsection{TE polarization with measurement on $\Gamma_R^+$}

Using the half space Green's function for the TE polarization, we may
similarly obtain 
\begin{eqnarray*}
u^s(x) &=& -\int_{\Gamma_R^+} G_{\rm TE}(x, y)
\frac{\partial}{\partial \nu}u^{\rm s}(y){\rm d}s_y
+\int_{\Gamma_R^+} \frac{\partial}{\partial
\nu(y)}G_{\rm TE}(x, y)u^{\rm s}(y){\rm d}s_y\\
&=& \frac{e^{{\rm i}\kappa r}}{\sqrt{r}}\frac{e^{{\rm
i}\frac{\pi}{4}}}{\sqrt{8\pi\kappa}}R\Big\{
\int_{0}^{\pi}\left[{\rm i}\kappa \cos(\phi-\varphi) u^{\rm s}(R,
\phi)-\frac{\partial}{\partial r}u^{\rm s}(R, \phi)\right]
e^{{\rm i}\kappa R\cos\left(\phi-\varphi\right)}{\rm
d}\phi\\
&&\quad +\int_{0}^{\pi}\left[{\rm i}\kappa \cos(\phi+\varphi) u^{\rm s}(R,
\phi)-\frac{\partial}{\partial r}u^{\rm s}(R, \phi)\right]
e^{{\rm i}\kappa R\cos\left(\phi+\varphi\right)}{\rm d}\phi
\Big\}+o(r^{-1/2}).
\end{eqnarray*}	
Substituting the above scattered field into $\sigma$ gives 
\begin{eqnarray}\label{RCS_TEf}
\sigma(\varphi) &=&\frac{R^2}{4\kappa}\Bigg|
\int_{0}^{\pi}\Big[{\rm i}\kappa \cos(\phi-\varphi) u^{\rm s}(R,
\phi)-\frac{\partial}{\partial r}u^{\rm s}(R, \phi)\Big]
e^{{\rm i}\kappa R\cos\left(\phi-\varphi\right)}{\rm d}\phi
\notag\\
&&\quad+\int_{0}^{\pi}\Big[{\rm i}\kappa \cos(\phi+\varphi) u^{\rm s}(R,
\phi)-\frac{\partial}{\partial r}u^{\rm s}(R, \phi)\Big]
e^{{\rm i}\kappa R\cos\left(\phi+\varphi\right)}{\rm
d}\phi\Bigg|^2. 
\end{eqnarray}

\section{Numerical experiments}\label{section:numerical}

In this section, we present some examples to demonstrate the numerical
performance of the proposed method. All the following experiments are done
by using FreeFem \cite{H-JNM-2012}.

\subsection{Example 1}

This is a benchmark example which is frequently used to test the numerical
solutions \cite{j-02}. We consider the TM polarized wave fields. The cavity is a
rectangle with width $\lambda$ and depth $0.25\lambda$. The geometry of the
cavity is shown in Figure \ref{TM_Example1}. The wavenumber in the free space is
$\kappa_0=32\pi$ and wavelength $\lambda=2\pi/\kappa_0=1/16$. The TBC is imposed
on the semi-circle with radius a half wavelength. The TBC truncation number
$N=20$. We consider two cases: an empty cavity and a cavity filled with a lossy
medium with the electric permittivity $\epsilon=4+{\rm i}$ and the magnetic
permeability $\mu=1$. First, we compute the backscatter RCS based on
\eqref{RCS_TMf} by using the adaptive finite element DtN method. The adaptive
mesh refinement is stopped when the total number of nodal points is over 15000.
The backscatter RCS is shown as solid lines for both cases in Figure
\ref{TM_Example1}. To make a comparison, we also compute the backscatter RCS
based on \eqref{RCS_TM} by using the coupling of finite element method and
boundary integral method (FEMBIM) proposed in \cite{LA-JCP-13}. The compared
result is obtained by using a uniform mesh with the total number of nodal points
$101105$. Clearly, we can get the same accuracy but with a relatively small
number of nodal points by applying the adaptive finite element DtN method. Using
the incident angle $\theta=\pi/3$ as a representative example, we present the
refined mesh after 4 iterations with a total number of nodal points $1674$ in
Figure \ref{TM_Example1_2}. As expected, the mesh is refined locally near the
two corners of the cavity. The a posteriori error estimates are plotted in
Figure \ref{TM_Example1_2} to show the convergence rate of the method. It
indicates that the meshes and the associated numerical complexity are
quasi-optimal, i.e., $\varepsilon_h=O({\rm DoF}_h^{-1/2})$ holds asymptotically,
where ${\rm DoF}_h$ is the degree of freedom or the number of nodal points for
the mesh $\mathcal M_h$. Using the same incident angle $\theta=\pi/3$, we
compare the backscatter RCS by using the finite element DtN method with the
adaptive mesh and uniform mesh refinements in Table \ref{ex1_tab}. Clearly, it
shows the advantage of using adaptive mesh refinements since it may give more
accurate results by using fewer number of nodal points. 
 
\begin{figure}
\centering
\includegraphics[width=0.45\textwidth]{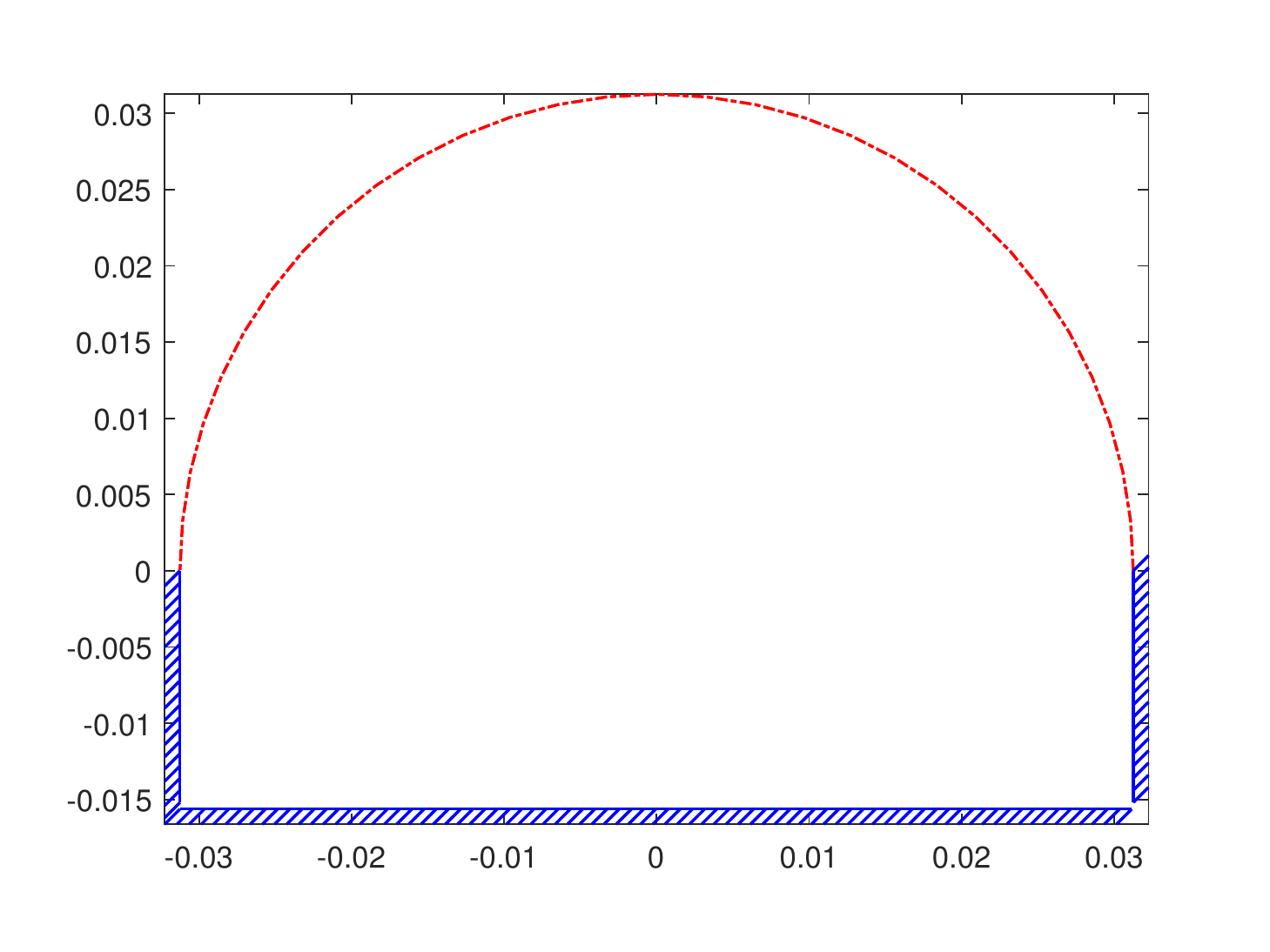}\quad
\includegraphics[width=0.45\textwidth]{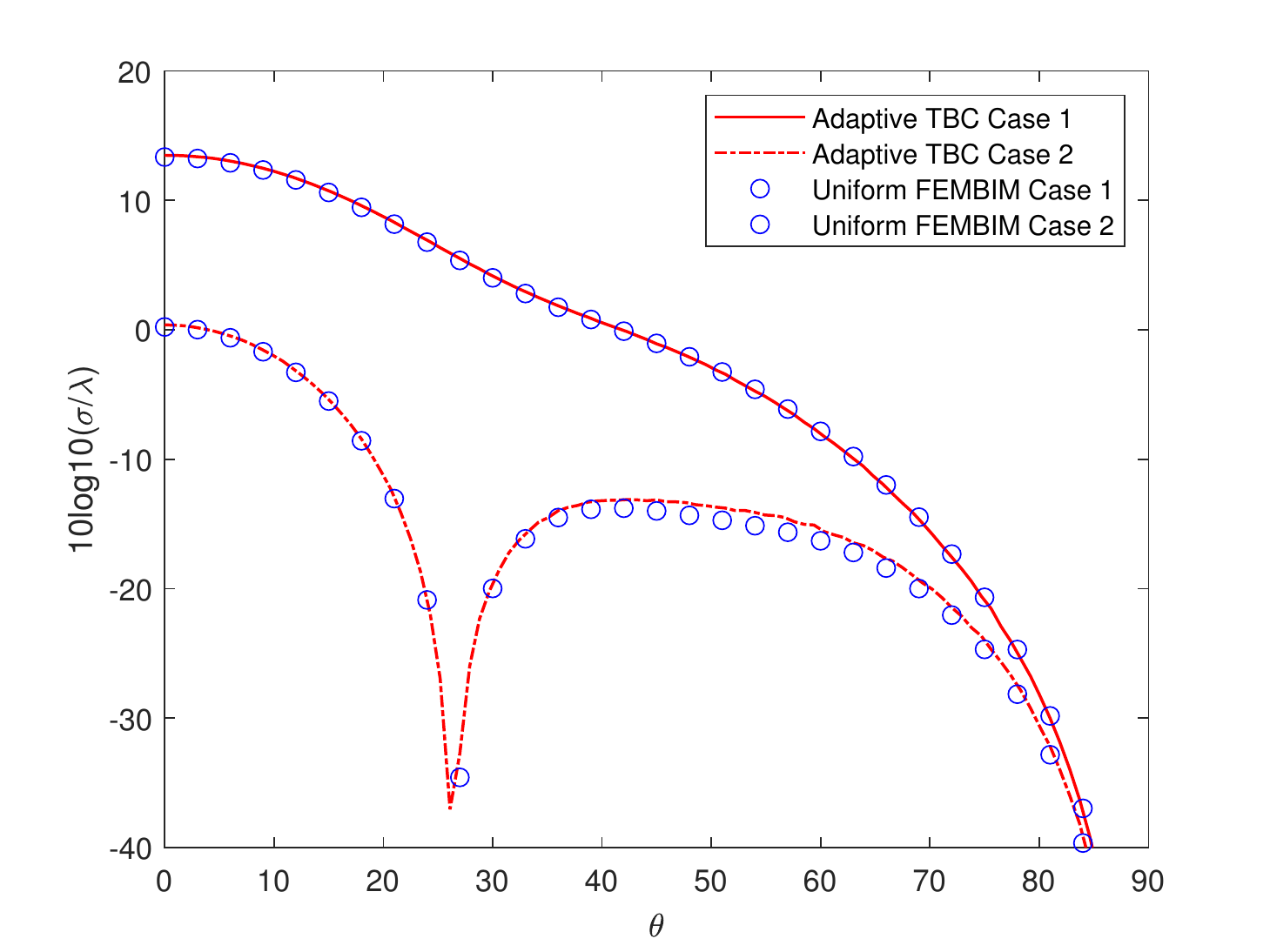}
\caption{Example 1: (left) Geometry of the cavity; (right) Backscatter RCS for
both cases by using the adaptive finite element DtN method (Adaptive TBC) and
the coupling of finite element method and boundary integral method (FEMBIM).}
\label{TM_Example1}
\end{figure}

\begin{figure}
\centering
\includegraphics[width=0.45\textwidth]{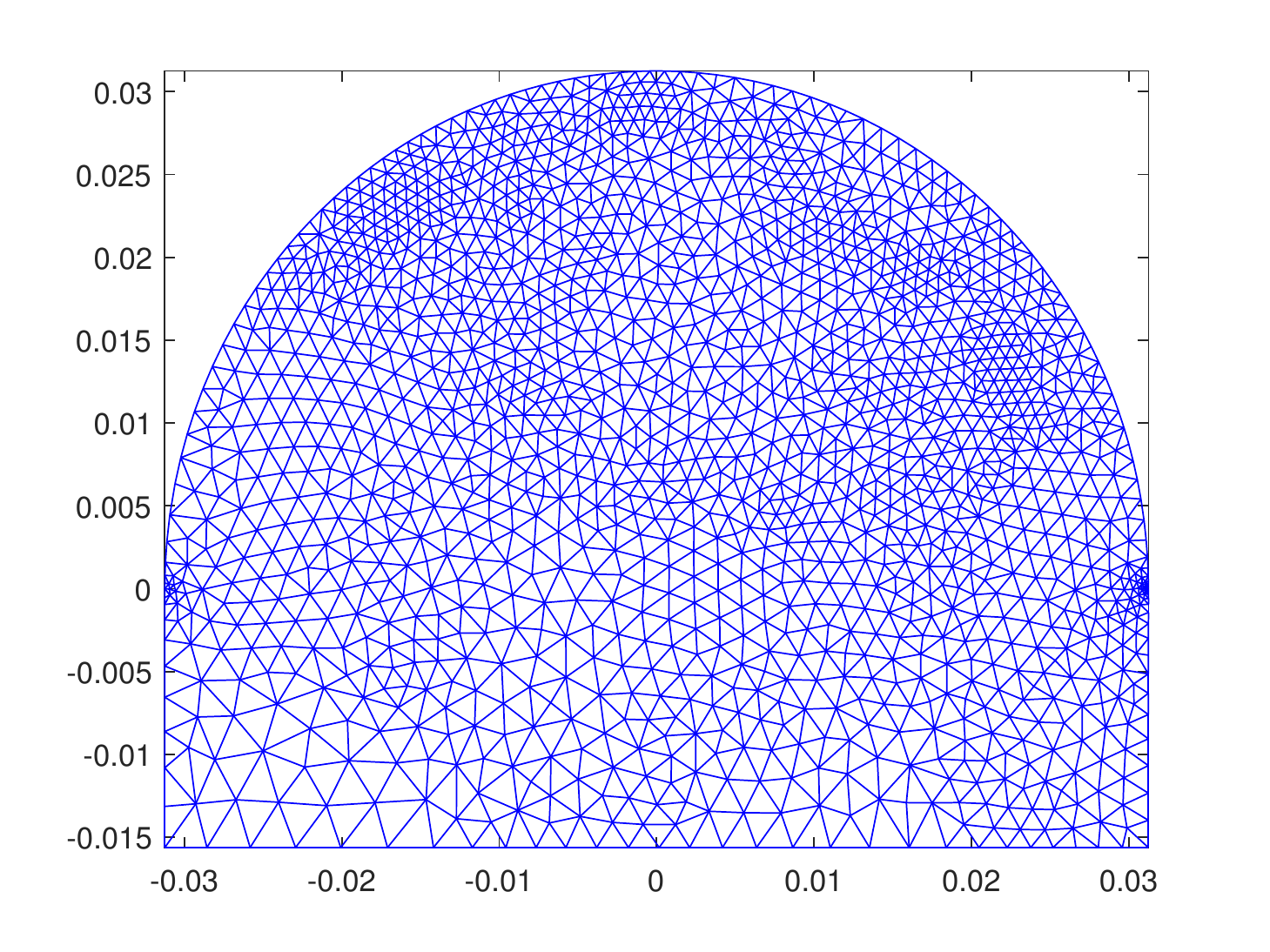}\quad
\includegraphics[width=0.45\textwidth]{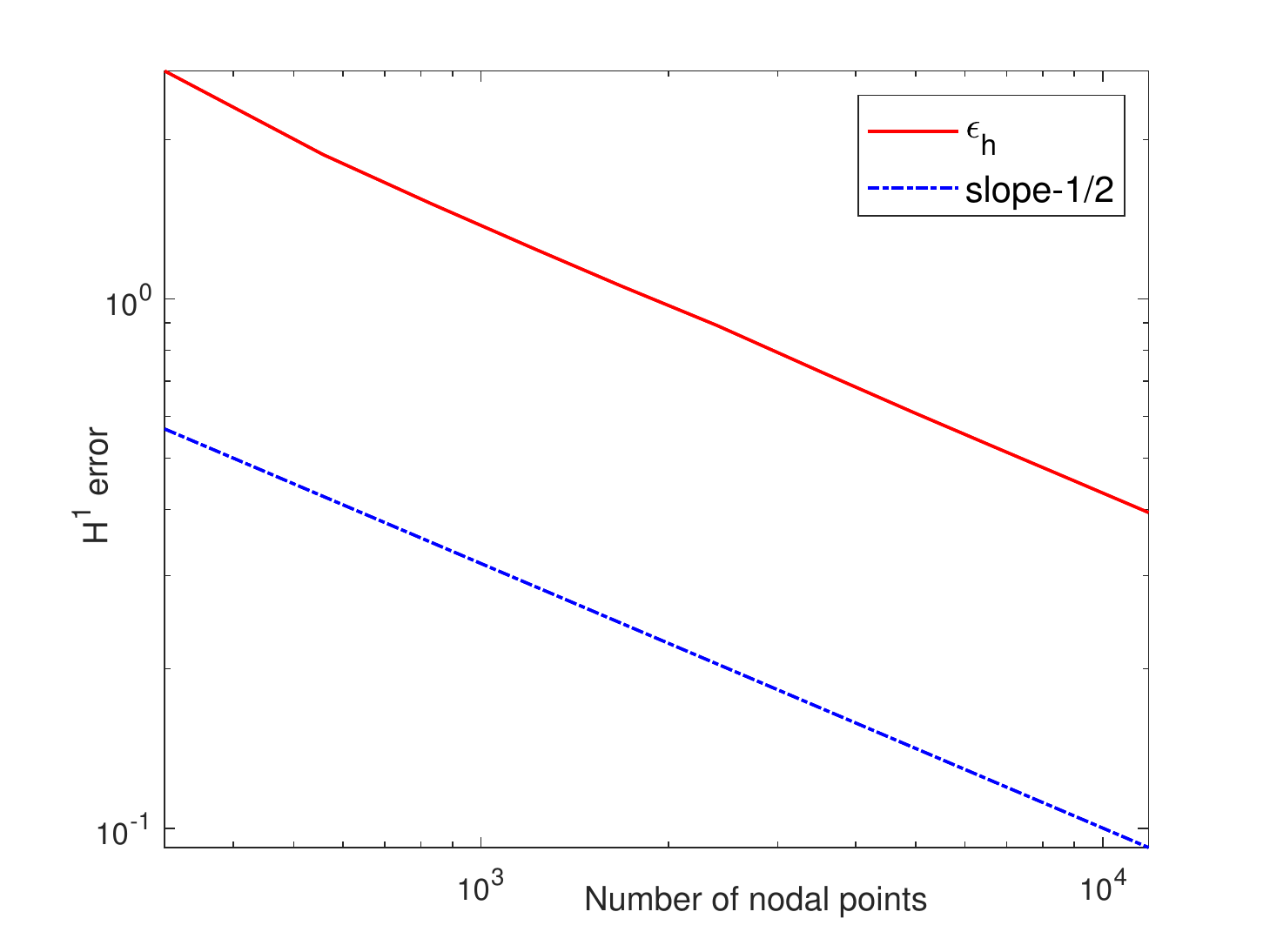}
\caption{Example 1: (left) Adaptive mesh after 4 iterations with a total number
of nodal points $1674$; (right) Quasi-optimality of the a posteriori error
estimates.}
\label{TM_Example1_2}
\end{figure}

\begin{table}
\caption{Example 1: comparison of numerical results using the adaptive meshe
and uniform mesh refinements. ${\rm DoF}_h$ is the degree of freedom or the
number of nodal points for the mesh $\mathcal M_h$.}
\begin{center}
\begin{tabular}{ |p{1.5cm}|p{2.5cm}|p{1.5cm}|p{2.5cm}|  }
 \hline
 \multicolumn{2}{|c}{Adaptive Mesh}
 &  \multicolumn{2}{c|}{Uniform Mesh}\\
 \hline
 ${\rm DoF}_h$ & RCS & ${\rm DoF}_h$ & RCS \\
 \hline
 310    & 0.0061367 & 310      & 0.0061367\\
 509    & 0.0042423 & &\\
 1173   & 0.0029691 & 1199	& 0.0032574\\
 2298   & 0.0023687 & &\\
 4438   & 0.0021304 & 4648	& 0.0022952\\
 10293  & 0.0018429 & &\\
 17875  & 0.0017774 & 18146	& 0.0018434\\
 23463  & 0.0017182 & &\\
 39878  & 0.0016583 & 40234	& 0.0017165\\
 65544  & 0.0016212 & &\\
 104706 & 0.0015878 & 111274	& 0.0016248\\
 \hline
\end{tabular}
\end{center}
\label{ex1_tab}
\end{table}

\subsection{Example 2}

This example also concerns the TM polarization. We compute the backscatter RCS
for a coated rectangular cavity, which has a width $1.2 \lambda$ and a depth
$0.8 \lambda$. The each vertical side of the cavity wall is coated with a thin
layer of some absorbing material, as seen in Figure \ref{TM_E2_RCS}. The
thickness of the coating is $0.012\lambda$ for both sides. The coating is made
of a homogeneous absorbing medium, which has a relative permittivity
$\epsilon_{\rm r}=12+0.144 {\rm i}$ and a relative permeability
$\mu_r=1.74+3.306{\rm i}$. This is a multi-scale problem and it is very
difficult to compute the numerical solution by using the finit element with
uniform mesh refinements in order to resolve the thin absorbing layers. Figure
\ref{TM_E2_RCS} plots the backscatter RCS. Again, the solid line is the result
by using the adaptive finite element DtN method, while the circles stand for the
result by using the coupling of the finite element method and the boundary
integral method (FEMBIM). We take the same stopping strategy as the one for
Example 1: the adaptive finite element DtN method is stopped when the number of
nodal points is over 15000. For the FEMBIM, the result is computed by using a
uniform mesh with the number of nodal points $112059$. Using a representative
example of incident angle  $\theta=\pi/3$, we present the refined mesh
after 2 iterations with 1508 ${\rm DoF}_h$ and the a posteriori error
estimates in Figure \ref{TM_E2_ref}. It is clear to note that the method can
capture the behavior of the numerical solution in the two thin absorbing
layers and displays the quasi-optimality between the meshes and the associated
numerical complexity, i.e., $\varepsilon_h=O({\rm DoF}_h^{-1/2})$ holds
asymptotically. As a comparison, we show the backscatter RCS by using the
finite element DtN method with the adaptive mesh and uniform mesh refinements in
Table \ref{ex2_tab}. Apparently, the adaptive mesh refinements yields a better
numerical performance than the uniform mesh refinements does, since the former
can give more accurate results even by using fewer number of nodal points.

\begin{figure}
\centering
\includegraphics[width=0.45\textwidth]{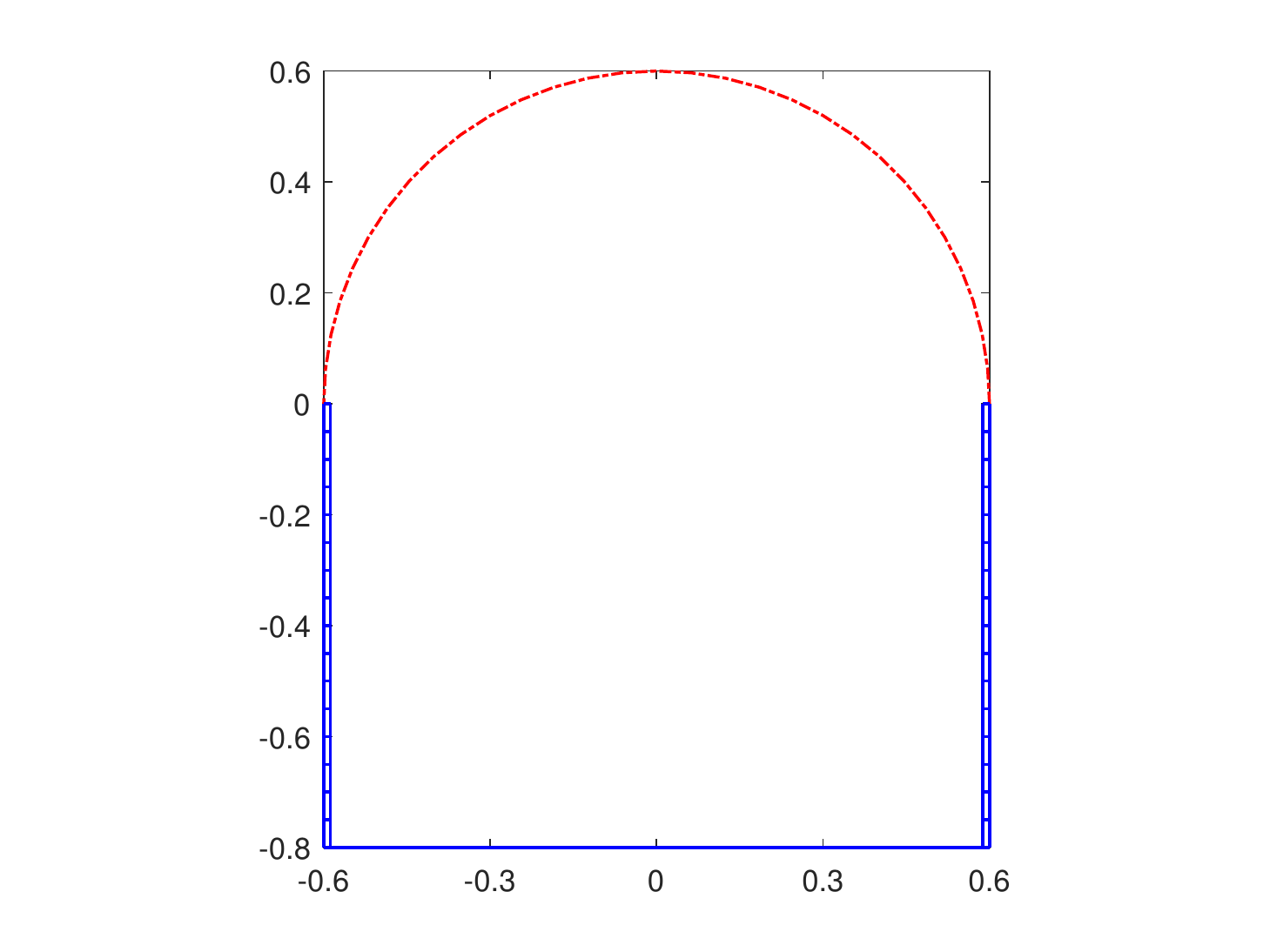}\quad
\includegraphics[width=0.45\textwidth]{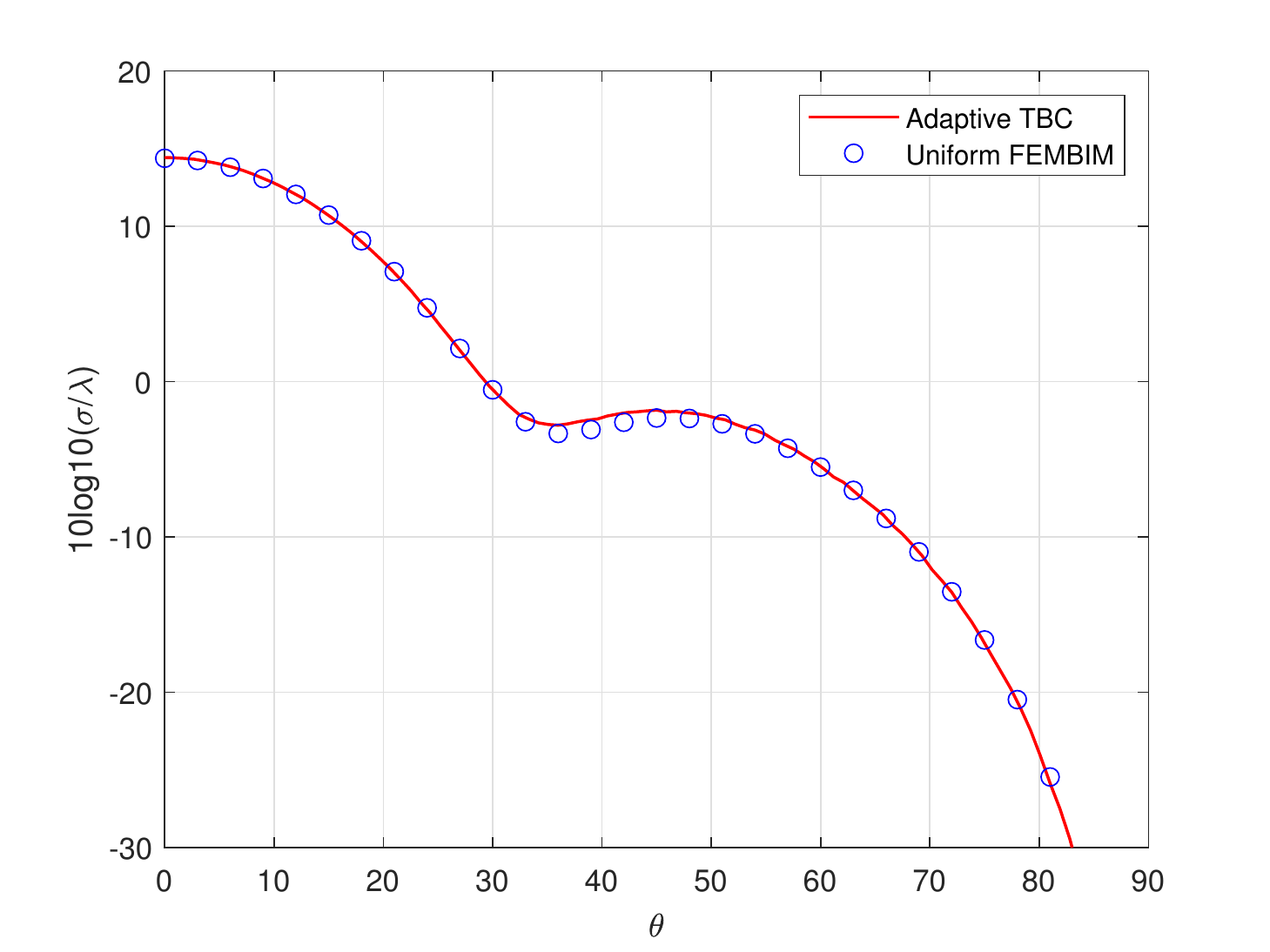}
\caption{Example 2: (left) Geometry of the cavity with thin absorbing layers on
the two vertical sides; (right) Backscatter RCS by using the adaptive finite
element DtN method (Adaptive TBC) and the coupling of finite element method and
boundary
integral method (FEMBIM).}
\label{TM_E2_RCS}
\end{figure}

\begin{figure}	
\centering
\includegraphics[width=0.45\textwidth]{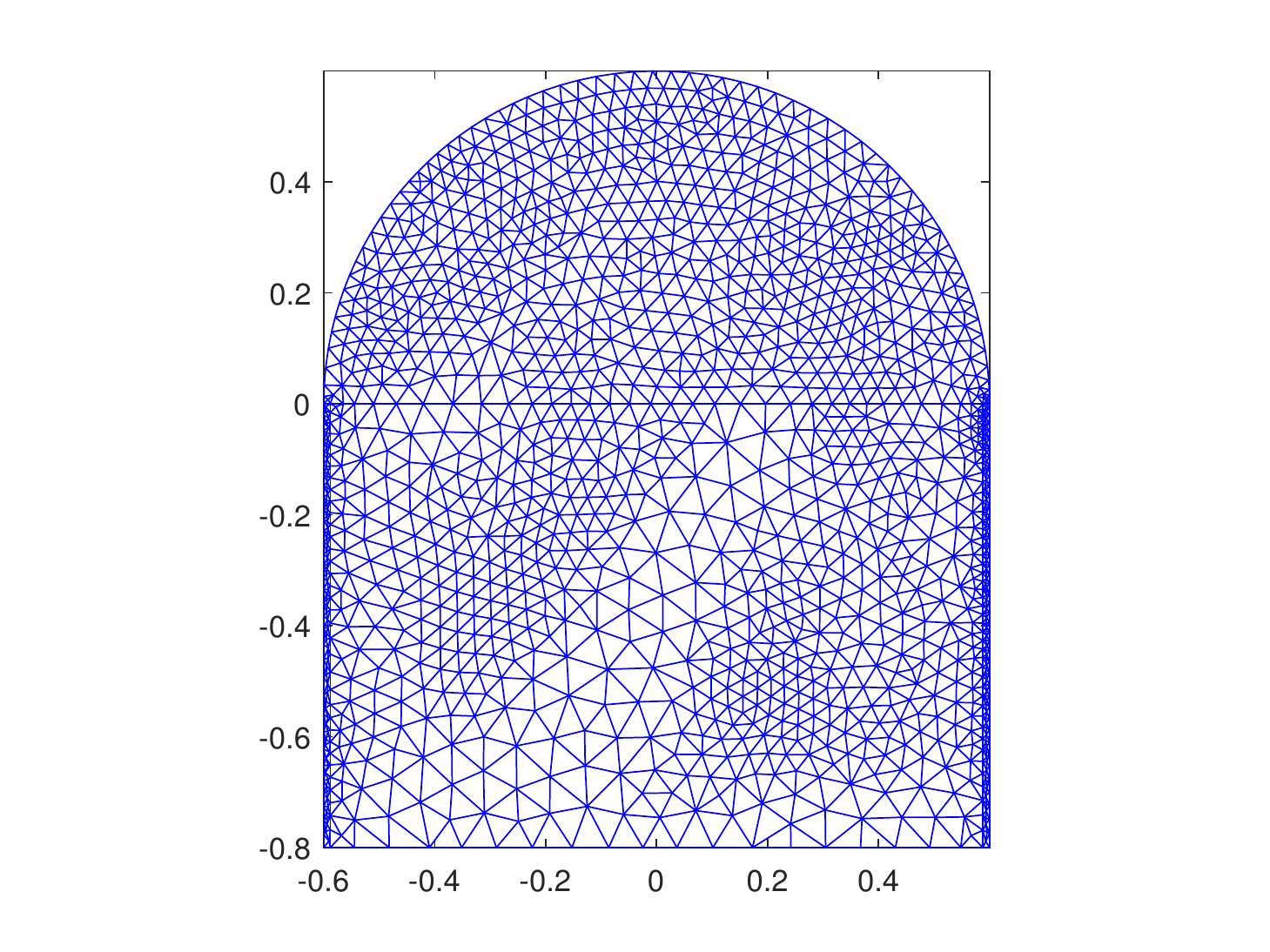}\quad
\includegraphics[width=0.45\textwidth]{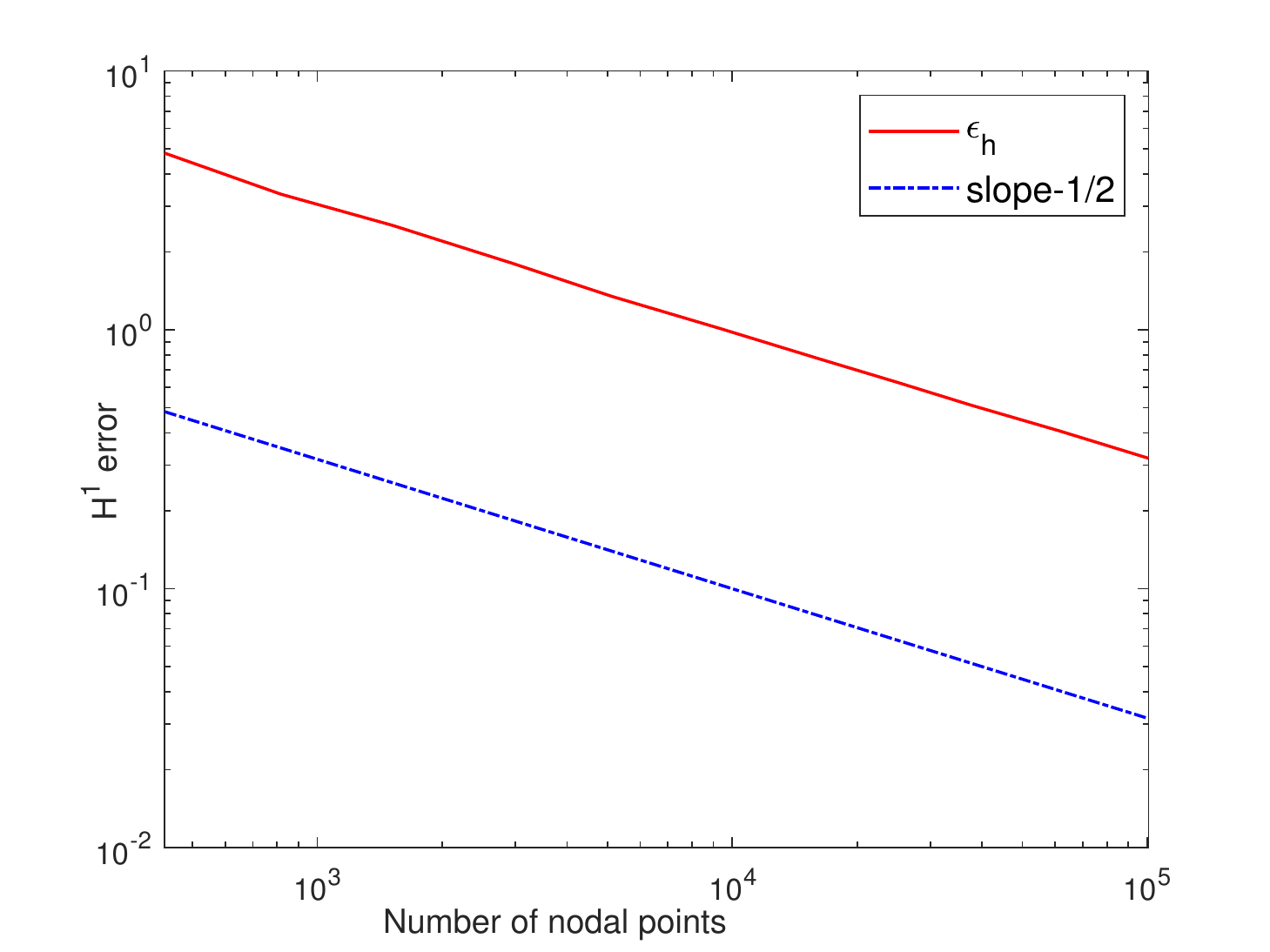}
\caption{Example 2: (left) Adaptive mesh after 2 iterations with a total number
of nodal points $1508$; (right) Quasi-optimality of the a posteriori error
estimates.}
\label{TM_E2_ref}
\end{figure}

\begin{table}
\caption{Example 2: comparison of numerical results using the adaptive meshe
and uniform mesh refinements. ${\rm DoF}_h$ is the degree of freedom or the
number of nodal points for the mesh $\mathcal M_h$.}
\begin{center}
\begin{tabular}{|p{1.5cm}|p{2.5cm}|p{1.5cm}|p{2.5cm}| }
 \hline
 \multicolumn{2}{|c}{Adaptive mesh}
 &  \multicolumn{2}{c|}{Uniform mesh}\\
 \hline
${\rm DoF}_h$ & RCS & ${\rm DoF}_h$ & RCS \\
\hline
429     & 0.84613846   & 429    & 0.8461385\\
818     & 0.54563374   &        & \\
1518    & 0.61134157   &        & \\
2924    & 0.55103383   & 3563   & 0.6429673\\
5118    & 0.57427862   &        & \\
9358    & 0.57726558   &        & \\
15928   & 0.58262461   & 13876  & 0.6048405\\
24786   & 0.58383447   &        & \\
37473   & 0.57827731 & 30981    & 0.5982348\\
61332   & 0.57586437 & 55028    & 0.5967787\\
100740  & 0.57570343 & 124138   & 0.5925520\\
161478  & 0.57704918   & 277710 & 0.5908951 \\
 \hline
\end{tabular}
\end{center}
\label{ex2_tab}
\end{table}

\subsection{Example 3}

In the above two examples, the rectangle-shaped cavities are below the ground.
For such cavities, we may either use the coupling of the finite element method
and boundary integral method (FEMBIM) method \cite{LA-JCP-13} or the finite
element perfectly matched layer (FEPML) method \cite{XW-CICP-2016} to solve the
scattering problems. In this example, we consider the TM case but the structure
of the cavity is above the ground. The width and depth of the cavity
is $1.2\lambda$ and $0.8\lambda$, respectively. However, we set two thin
rectangular PEC humps in the middle of the cavity with height
$\frac{16}{15}\lambda$ and $\frac{8}{15}\lambda$, respectively. The width is
$\frac{1}{20}\lambda$ for both humps. The geometry of the cavity and the
backscatter RCS are shown in Figure \ref{TM_E3_G}. Again, the stopping criterion
is that the
mesh refinement is stopped when the number of nodal points is over 15000. Using
the incident angle $\theta=\pi/3$ as an example, we show the refined mesh after
two iterations with the number of nodal points $1409$ and the a posteriori error
estimates in Figure \ref{TM_E3_r}. The adaptive DtN method is able to
generate locally refined meshes around the corners of the cavity where the
solution has a singularity. The quasi-optimality is also obtained for the a
posteriori error estimates. 

\begin{figure}
\centering
\includegraphics[width=0.45\textwidth]{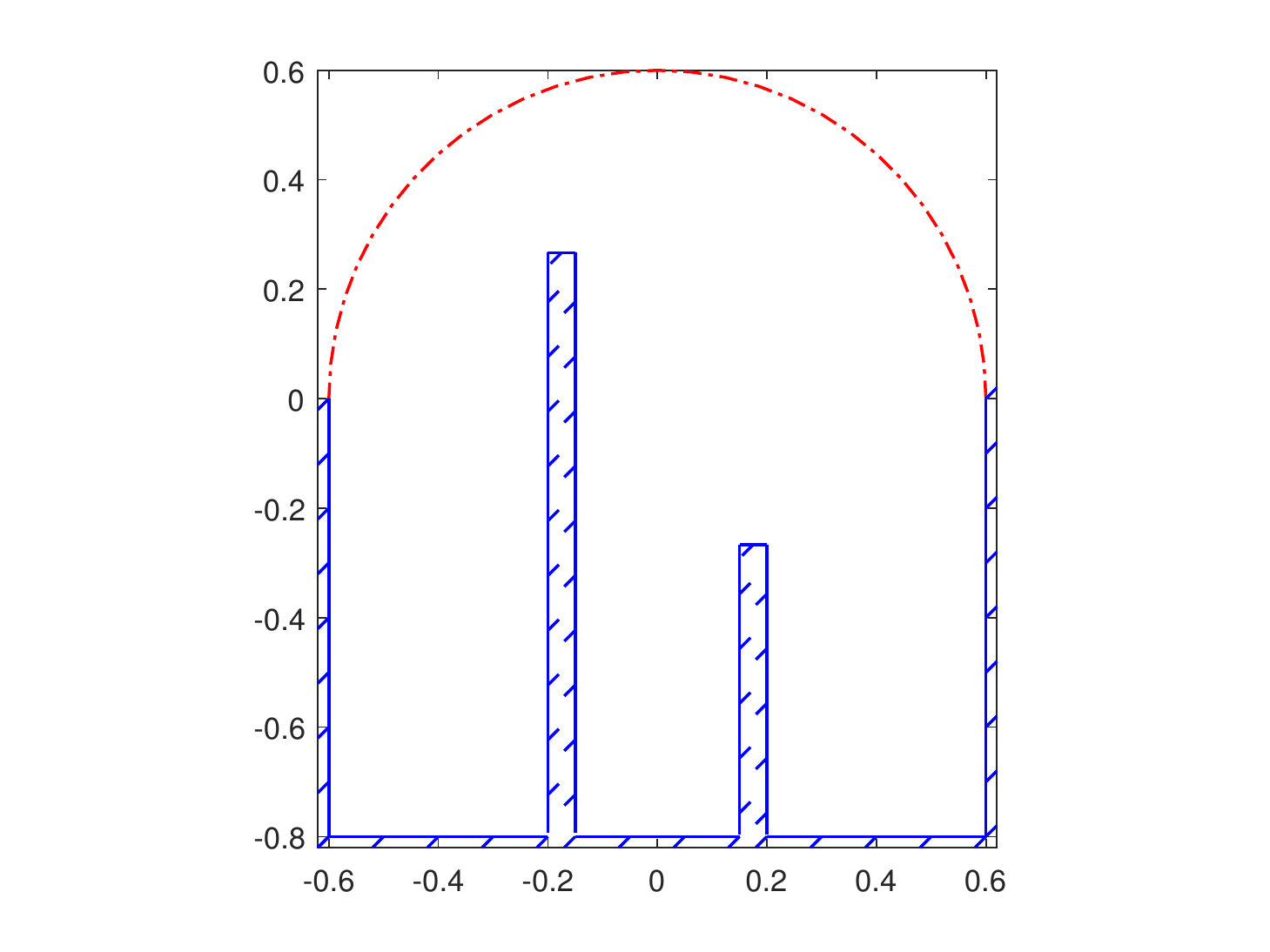} \quad
\includegraphics[width=0.45\textwidth]{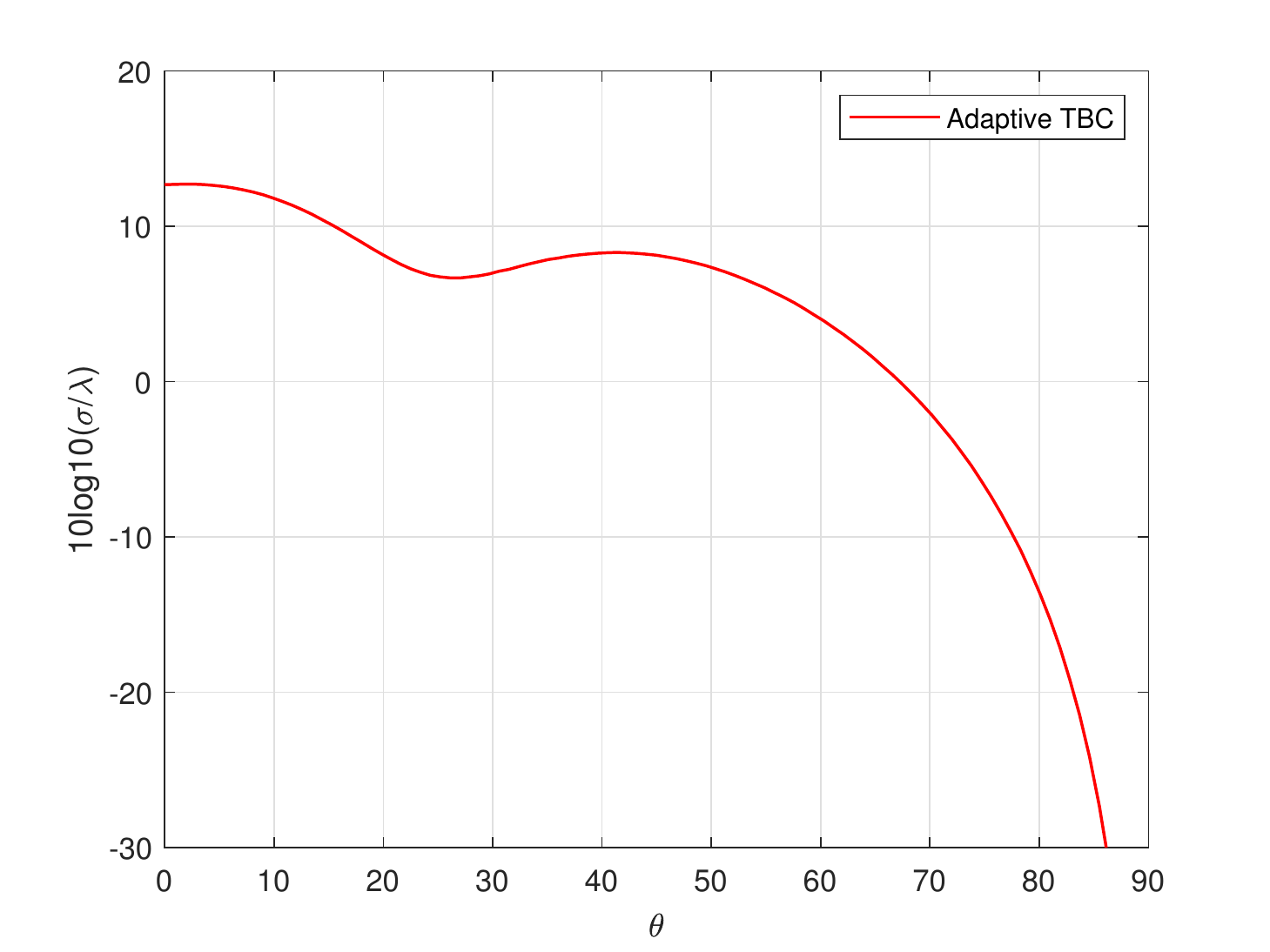}
\caption{Example 3: (left) Geometry of the cavity; (right) Backscatter RCS by
using the adaptive finite element DtN method.}
\label{TM_E3_G}
\end{figure}

\begin{figure}
\centering
\includegraphics[width=0.45\textwidth]{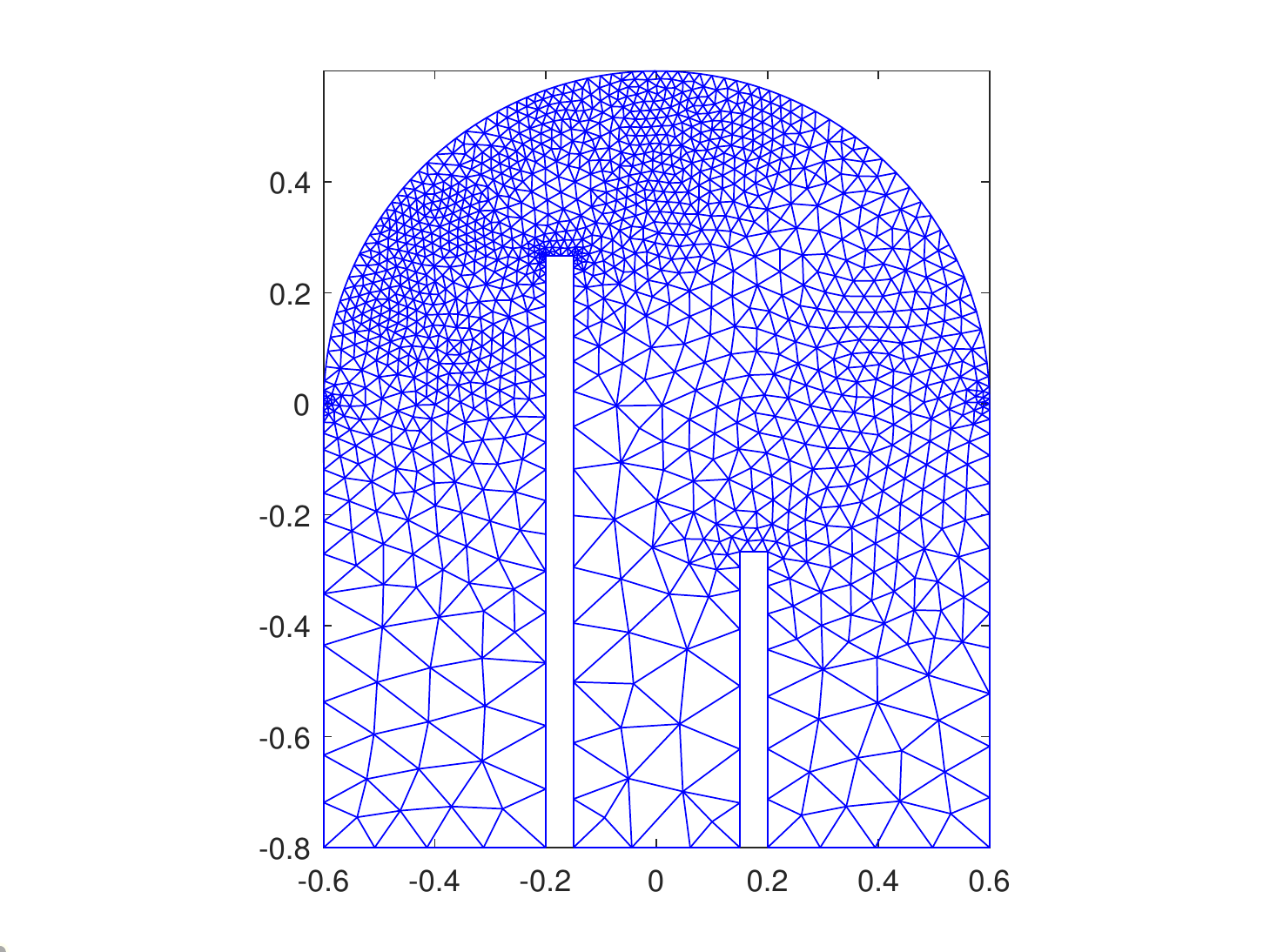}\quad
\includegraphics[width=0.45\textwidth]{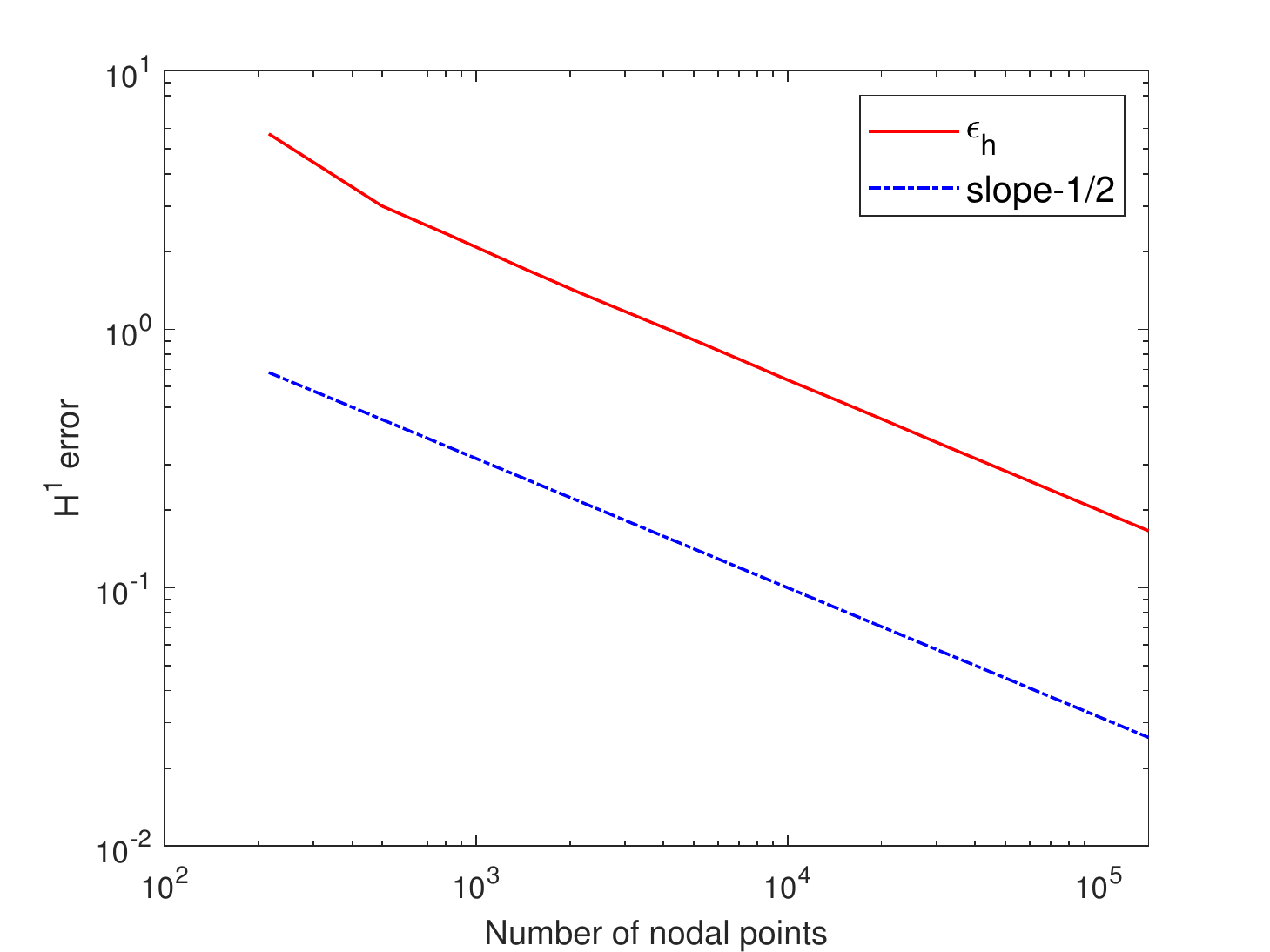}
\caption{Example 3: (left) Adaptive mesh after 2 iterations with a total number
of nodal points $1409$; (right) Quasi-optimality of the a posteriori error
estimates.}
\label{TM_E3_r}
\end{figure}

\subsection{Example 4}

In this example, we consider the cavity scattering problem for TE polarization.
The cavity is a rectangle with a fixed width $0.025 \rm m$ and a fixed depth
$0.015 \rm m$. The cavity is empty and filled with the same homogeneous medium
as that in the free space. Instead of considering the illumination by a plane
wave with a fixed frequency, we compute backscatter RCS with the frequency
ranging from $2\,\rm GHz$ to $18\, \rm GHz$. Correspondingly, the range
of the aperture of cavity is from $\frac{1}{6}\lambda$ to $1.5\lambda$.
The incident angle is $\frac{4}{9}\pi$. Figure \ref{TE_E1_RCS}
shows the backward RCS by using the adaptive finite element DtN method, where
the red-solid line and blue circles show the results obtained by applying
\eqref{RCS_TEf} and \eqref{RCS_TE}, respectively. The stopping criterion is
that the mesh refinement is stopped when the number of nodal points is over
25000.

\begin{figure}
\centering
\includegraphics[width=0.45\textwidth]{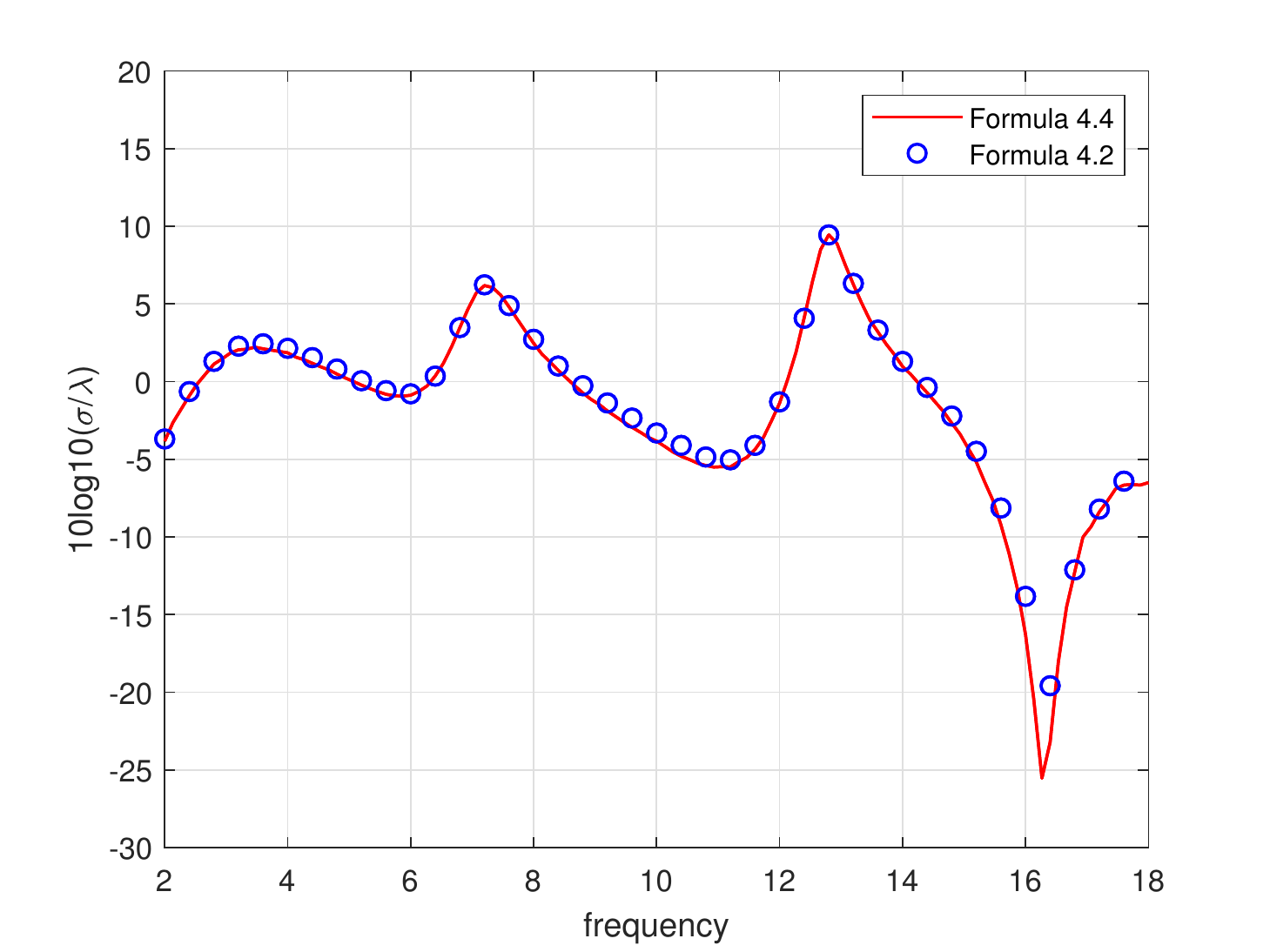}
\caption{Example 4: Backscatter RCS.}
\label{TE_E1_RCS}
\end{figure}

\section{Conclusion}\label{section:conclusion}

In this paper, we have developed an adaptive finite element DtN method for
solving the open cavity scattering problems. The a posteriori error estimates
are obtained for both of the TM and TE polarization waves. The estimates consist
of the finite element discretization error and the DtN operator truncation
error. The latter is shown to decay exponentially with respect to the
truncation number. Along the line of this research, future work includes
extending the analysis to the more challenging three-dimensional problem, where
Maxwell's equations need to be considered, and the problems of elastic wave
scattering from cavities. An open problem is to develop a DtN based TBC on the
upper semi-circle enclosing the cavities. We hope to report the progress on
these aspects elsewhere in the future.

\end{document}